\newtheorem{theorem}{Theorem}[section]
\newtheorem{lemma}[theorem]{Lemma}
\journal{arXiv}
\begin{document}
\newcommand{\Q}{\mathbb{Q}}
\newcommand{\N}{\mathbb{N}}
\newcommand{\Z}{\mathbb{Z}}
\newcommand{\R}{\mathbb{R}}
\newcommand{\M}{\mathbb{M}}
\newcommand{\om}{\omega}
\newcommand{\tx}[1]{\quad\mbox{#1}\quad}

\begin{frontmatter}
\title{{\bf{Novel numerical analysis for simulating the generalized 2D multi-term time fractional Oldroyd-B fluid model }}\tnoteref{label1}}

\author{Yanqin Liu\fnref{label2,label3}}
\ead{yqliumath@163.com}

\author{Fawang Liu\corref{cor1}\fnref{label3}}
\ead{f.liu@qut.edu.au}

\author{Libo Feng\fnref{label3}\corref{}}
\ead{libo.feng@hdr.qut.edu.au}

\author{Baogui Xin\fnref{label4}\corref{}}
\ead{xin@tju.edu.cn}

\cortext[cor1]{Corresponding author.}

\address[label2]{School of Mathematical Sciences, Dezhou University, Dezhou. 253023, China}
\address[label3]{School of Mathematical Sciences, Queensland University of Technology, GPO Box 2434, Brisbane, Qld. 4001, Australia}
\address[label4]{Nonlinear Science Center, College of Economics and Management, Shandong University and Technology, Qingdao 266590, China}

\begin{abstract}
In this paper, we consider the finite difference method for the generalized two-dimensional (2D) multi-term time-fractional Oldroyd-B fluid model, which is a subclass of non-Newtonian fluids. Different from the general multi-term time fractional equations, the generalized fluid equation not only has a multi-term time derivative but also possess a special time fractional operator on the spatial derivative.
Firstly, a new discretization of the time fractional derivative is given. And a vital lemma, which plays an important role in the proof of
stability, is firstly proposed. Then the new finite difference scheme is constructed. Next, the unique solvability, unconditional stability and
convergence of the proposed scheme are proved by the energy method. Numerical examples are given to verify the numerical accuracy and
efficiency of the numerical scheme as compared to theoretical analysis, and this numerical method can extended to solve other non-Newtonian fluid models.
\end{abstract}
\begin{keyword}
Finite difference method\sep Energy method\sep Caputo fractional derivative\sep Generalized Oldroyd-B fluid\sep Multi-term time fractional
derivative\sep Stability and convergence.
\end{keyword}

\end{frontmatter}

\section{Introduction}
\label{sec:1}
\par In the last few decades, non-Newtonian fluids which do not satisfy a linear relationship between the stress tensor and the deformation
tensor have been widely applied in engineering and industry. The constitutive equation of non-Newtonian fluids is much more complex than its
Newtonian counterparts, and the constitutive equations involving fractional calculus have been proved to be a valuable tool to handle
viscoelastic properties \cite{Bagley,Friedrich} and some results are obtained which are in good agreement with the experimental data
\cite{Makris,Jimenez}.
\par One particular subclass of non-Newtonian fluids is the generalized Oldroyd-B fluid, which has been found to approximate the response to
many dilute polymeric liquids. Some recent work regarding the generalized Oldroyd-B fluids can be found in references
\cite{Qi,Zhao6,Zhao,Jiang}. The fundamental electromagnetic relations have been summarized by Sutton \cite{Sutton}. For the
magnetohydrodynamic(MHD) flow, Zheng et al. \cite{Liu,Zheng} discussed the flow between two plates with slip boundary conditions and obtain
the exact solution in terms of Fox $H$-function by some transform techniques. Khan et al. \cite{Khan1,Khan2} investigated the MHD flow of a
generalized Oldroyd-B fluid in a circular pipe and a porous space, respectively.
\par One important part is the following incompressible Oldroyd-B fluid which is bounded by two infinite rigid plates, when a magnetic field
is imposed on the above flow under the assumption of low magnetic Reynolds number. Fetecau et al. \cite{Fetecau} considered the two
dimensional case:
$$(1+\lambda D^\alpha_t)\frac{\partial u(x,y,t)}{\partial t}=\upsilon(1+\theta D^\beta_t)(\frac{\partial^2}{\partial
x^2}+\frac{\partial^2}{\partial y^2})u(x,y,t),$$
where $\lambda$ and $\theta$ are relaxation and retardation times, $\nu=\frac{\mu}{\rho}$ is the kinematic viscosity of the fluid, $\rho$ is
the density of the fluid, $\mu$ is the dynamic viscosity coefficient of the fluid. $D^\alpha_t$, $D^\beta_t$ are the time fractional
operators, $0<\alpha,\beta<1$ and $u(x,y,t)$ is the velocity.
Khan et al. \cite{Khan2} considered the following generalized Oldroyd-B fluid in a porous medium with the influence of Hall current
\begin{align*}
   (1+\lambda{D}_t^{\alpha})\frac{\partial u(x,t)}{\partial t}&=\nu(1+\theta{D}_t^\beta)\frac{\partial^2u(x,t)}{\partial
   x^2}-\frac{\nu\varphi_1}{k}(1+\theta{D}_t^{\beta})u(x,t)\\
   &-\frac{\sigma B_0^2}{\rho(1-i\phi)}(1+\lambda{D}_t^{\alpha})u(x,t),
\end{align*}
where $k$ is the permeability of the porous medium, $\varphi_1$ is the porosity of the medium, $\phi$ is the Hall parameter, $B_0$ is the
magnetic intensity and $\sigma$ is the electrical conductivity.
\par Stimulated by the above research in this field, we will give the following two dimensional Oldroyd-B fluid with the influence of Hall
current
\begin{align*}
   (1+\lambda{D}_t^{\alpha})\frac{\partial u(x,y,t)}{\partial t}&=\nu(1+\theta{D}_t^\beta)(\frac{\partial^2u}{\partial
   x^2}+\frac{\partial^2u}{\partial y^2})-\frac{\nu\varphi_1}{k}(1+\theta{D}_t^{\beta})u(x,y,t)\\\nonumber
   &-\frac{\sigma B_0^2}{\rho(1-i\phi)}(1+\lambda{D}_t^{\alpha})u(x,y,t),
\end{align*}
we will give the detailed derivation of this fluid model in Section 2.
\par The research of the incompressible Oldroyd-B fluid is limited to the analytical solution, numerical methods with supporting stability
and convergence analysis are limited. In order to give the numerical methods and the discrete scheme, we will consider the following
generalized two-dimensional multi-term time fractional non-Newtonian diffusion equation:
\begin{align}
& \sum_{l=1}^pa_lD^{\gamma_l}_tu+b_1\frac{\partial u}{\partial t}+\sum_{m=1}^qc_mD^{\alpha_m}_tu+b_2u \nonumber\\
= ~~& b_3(\frac{\partial^2 u}{\partial x^2}+\frac{\partial^2 u}{\partial y^2})
+\sum_{r=1}^sd_rD^{\beta_r}_t(\frac{\partial^2 u}{\partial x^2}+\frac{\partial^2 u}{\partial y^2})
+f(x,y,t), \label{e1}
\end{align}
where $u=u(x,y,t),$ $(x,y)\in \Omega=(0,L_x)\times(0,L_y),0<t\leq T,$ and with the following initial condition
$$u(x,y,0)=\varphi(x,y), u_t(x,y,0)=\phi(x,y),(x,y)\in \Omega$$
and the boundary conditions
$$u(0,y,t)=u(L_x,y,t)=0, 0\leq y\leq L_y, t>0,$$
$$u(x,0,t)=u(x,L_y,t)=0, 0\leq x\leq L_x, t>0,$$
where $a_l>0,b_i>0,c_m>0,d_r>0,$ $1<\gamma_l<2, 0<\alpha_m,\beta_r<1$, $l=1,\cdots, p, i=1,2,3,m=1,\cdots,q, r=1,\cdots,s$, and the Caputo
time frational derivative $D^\gamma_t u$, $D^\alpha_t u$ are given by \cite{Hilfer,Podlubny}
$$D^\gamma_t u=\frac{1}{\Gamma(2-\gamma)}\int_0^t(t-s)^{1-\gamma}\frac{\partial^2 u(x,y,s)}{\partial s^2}ds, 1<\gamma<2,$$
$$D^\alpha_t u=\frac{1}{\Gamma(1-\alpha)}\int_0^t(t-s)^{-\alpha}\frac{\partial u(x,y,s)}{\partial s}ds, 0<\alpha<1,$$
where $\Gamma(\cdot)$ is the Gamma function.
\par Although there are some literatures \cite{Liu,Zheng,Ming,Jiang1} give the analytical solutions of the generalized Oldroyd-B fluid, but
they are always given in series form with generalized G or $H$-function. Therefore, numerical method is a promising tool to solve these
equations. And up to now, numerical methods to solve fractional diffusion equation mainly are finite difference methods
\cite{Liu04,Liu07,Zhuang08,Zhuang09,Liu13,Liu15,Feng2}, finite element method \cite{Zhao16,Yang17,ZhaoX17,Fan18}, finite volume methods
\cite{Liu14,Li17}, spectral methods \cite{Zeng14,Zheng15,Zhaox} and meshless methods \cite{Liuq14,Liuq15}. Bazhlekova et al.
\cite{Bazhlekova} proposed a finite difference method to solve the viscoelastic flow with generalized fractional Oldroyd-B model, fractional
operator is Riemann-Liouville time fractional derivative and they utilised the Gr\"{u}nwald-Letnikov formula to approximate it, which was low
accuracy and lacked theoretical analysis. Recently, Feng et al. \cite{Fengf} gave the numerical solution to these problems, but it confined to
one-dimensional case, the two-dimensional case is seldom solved, and the temporal convergence order we get in this paper is
${\min\{3-\gamma_l,2-\alpha_m,2-\beta_r\}}$, this is also better than \cite{Fengf} which the temporal convergence is only first order.
\par The outline of the paper is as follows. In section 2, preliminary knowledge is given, in which a new numerical scheme to discretise the
time fractional derivative is proposed. In section 3, we develop the finite difference method for the generalized Oldroyd-B fluid model and
derive the implicit scheme. And we proceed with the proof of the stability and convergence of the scheme by energy method and discuss the
solvability of the numerical scheme in section 4. In section 5, we present two numerical examples to demonstrate the effectiveness of our
method and some conclusions are drawn finally.

\section{Formulation of the Multi-term time fractional flow model}

\par We impose a magnetic field in the positive $y$-axis with intensity $B_0$ and the electrical conductivity is $\sigma$. Suppose that the
main flow only takes place along the $x-$axis, then we shall assume the velocity field and the extra stress of the form
\begin{equation} \label{e2}
\textbf{V}=\textbf{V}(x,y,t)=u(x,y,t)\textbf{i},~\textbf{S}=\textbf{S}(x,y,t)=S(x,y,t)\textbf{i}
\end{equation}
where \textbf{i} is the unit vector in the $z-$direction of the Cartesian coordinate system $x, y$ and $z$.
\par The conservation equation of an incompressible fluid is
\begin{equation} \label{e3}
div \textbf{V}=0,
\end{equation}
\begin{equation} \label{e4}
\rho \frac{d\textbf{V}}{dt}=-\nabla P+div \textbf{S}+\textbf{J}\times\textbf{B}+\textbf{r},
\end{equation}
where $\rho$ is the fluid density, $P$ is the hydrostatic pressure, $\nabla$ is the gradient operator, $\textbf{r}$ is the Darcy resistance
for an Oldroyd-B fluid, and $\textbf{J}$ is the current density, $\textbf{B}$ is the total magnetic field so that
$\textbf{B}=\textbf{B}_0+\textbf{b}$, $\textbf{B}_0 $ and $\textbf{b} $ are the applied and induced magnetic fields, respectively. By Eq.(2),
the continuous equation (3) holds automatically. The constitutive equation for a generalized Oldroyd-B fluid is defined as\cite{Hilfer}:
 \begin{equation} \label{e5}
(1+\lambda\frac{D^\alpha}{Dt^\alpha})\textbf{S}=\mu(1+\theta\frac{D^\beta}{Dt^\beta})\textbf{A},\quad (0<\alpha,\beta<1)
\end{equation}
where $\mu$ is the dynamic viscosity and $\lambda$ and $\theta$ are relaxation and retardation times, and $\alpha$ and $\beta$ are fractional
calculus orders, and $\textbf{A}=\textbf{L}+\textbf{L}^T (\textbf{L}=\nabla\textbf{V})$ denotes the first Rivlin-Ericksen tensor. The material
derivative operators $\frac{D^\alpha}{Dt^\alpha}$ and $\frac{D^\beta}{Dt^\beta}$ can be expressed as
 \begin{equation} \label{e6}
\frac{D^\alpha}{Dt^\alpha}\textbf{S}=D^\alpha_t\textbf{S}+(\textbf{V}\cdot\nabla)\textbf{S}-\textbf{L}\textbf{S}-\textbf{S}\textbf{L}^T,
\end{equation}
 \begin{equation} \label{e7}
\frac{D^\beta}{Dt^\beta}\textbf{A}=D^\beta_t\textbf{A}+(\textbf{V}\cdot\nabla)\textbf{A}-\textbf{L}\textbf{A}-\textbf{A}\textbf{L}^T,
\end{equation}
where $D^\alpha_t$ and $D^\beta_t$ are the Caputo fractional derivative operators of order $\alpha$ and $\beta$ with respect to $t$,
respectively.
$\theta=0$ we have a generalized Maxwell fluid. The classical Navier-Stokes fluid can be obtained for $\lambda=\theta=0$ and $\alpha=\beta=1$.
The Hall effect is taken into consideration, and thus we have
\begin{equation} \label{e8}
\textbf{J}\times\frac{\omega_e\tau_e}{B_0}(\textbf{J}\times\textbf{B})=
\sigma[\textbf{E}+\textbf{V}\times\textbf{B}\times\frac{1}{en_e}\bigtriangledown p_e],
\end{equation}
in which $\omega_e$ is the cyclotron frequency of electrons, $\tau_e$ is the electron collision time, $\sigma$ is the electrical conductivity,
$e$ is the electron charge, $n_e$ is the number density of electrons and $p_e$ is the electron pressure, $\textbf{E}$ is the electric field.
Further, it is assumed that there is no applied or polarization voltage so that $\textbf{E}=0$.
\par Moreover, Darcy resistance $\textbf{r}$ can also be interpreted as measure of the flow resistance offered by the solid matrix, thus
$\textbf{r}$ satisfies the following equation:
\begin{equation} \label{e9}
(1+\lambda\frac{D^\alpha}{D t^\alpha})\textbf{r}=-\frac{\mu\phi_1}{k}(1+\theta\frac{D^\beta}{D t^\beta})\textbf{V}.
\end{equation}
where $\phi_1$ is the porosity of the medium.
\par Substituting (6) and (7) into (5) and taking into account the initial condition $\textbf{S}(x,y,0)=0$, we obtain $S_{xx}=S_{xy}=S_{yy}=0$
and the relevant partial differential equations
\begin{equation} \label{e10}
(1+\lambda D_t^\alpha) S_{zx}=\mu(1+\theta D_t^\beta)\frac{\partial u}{\partial x},~(1+\lambda D_t^\alpha)S_{zy}=\mu(1+\theta
D_t^\beta)\frac{\partial u}{\partial y},
\end{equation}
then substituting Eqs.(8-10) into Eq.(4) and neglecting the pressure gradient, then we obtain the generalized two dimensional Oldroyd-B fluid
with the influence of Hall current.

\section{Preliminary}

\par Firstly, in the $x-$direction $[0,L_x]$, we take the mesh points $x_i=ih_x$, $i=0, 1,\cdots,M_x$, in the $y-$direction $[0,L_y]$, we
take the mesh points $y_j=jh_y$, $j=0, 1,\cdots,M_y$, and $t_n = n\tau$, $n = 0, 1,\cdots, N$, where $h_x =L_x/M_x$,$h_y =L_y/M_y$, $ \tau=
T/N$ are the uniform spatial step size and temporal step size, respectively. Denote $\Omega_{\tau}\equiv \{t_n|~0\leq n\leq N\}$,
$\Omega_{h}\equiv \{(x_i,y_j)|~0\leq i\leq M_x, 0\leq j\leq M_y\}$. Suppose $u_{ij}^n=u(x_i,y_j,t_n)$, $u_{ij}^n$ is a grid function on
$\Omega_{h}\times\Omega_{\tau}$. We introduce the following notations:
\begin{align*}
\nabla_tu_{ij}^{n}=\frac{u_{ij}^n-u_{ij}^{n-1}}{\tau},\quad
u_{ij}^{n-\frac{1}{2}}=\frac{u_{ij}^n+u_{ij}^{n-1}}{2},\quad\nabla_xu_{ij}^{n}=\frac{u_{ij}^n-u_{i-1j}^{n}}{h},\\
\delta_x^2u_{ij}^n=\frac{u_{i-1j}^n-2u_{ij}^n+u_{i+1j}^n}{h_x^2},\quad\delta_y^2u_{ij}^n=\frac{u_{ij-1}^n-2u_{ij}^n+u_{ij+1}^n}{h_y^2}.
\end{align*}
\par For improving the temporal convergence accuracy from first order \cite{Fengf} to higher order, we will give a new discretization of the fractional derivative ${D_t^\alpha}u$ $(0<\alpha<1)$.
For the function $u\in C^{0,0,3}_{x,y,t}(\Omega\times(0,T])$ at the point$(x_i,y_j,t_{n-\frac{1}{2}})$, then the following equality holds
\begin{align}
{D_t^\alpha}u(\cdot,t_{n-\frac{1}{2}})&=\frac{1}{\Gamma(1-\alpha)}
\int_{0}^{t_{n-\frac{1}{2}}}\frac{\partial_\eta u(\cdot,\eta)d\eta}{(t_{n-\frac{1}{2}}-\eta)^\alpha}\nonumber\\\label{e11}
&=\frac{1}{\Gamma(1-\alpha)}\sum_{k=1}^{n-1}\int_{t_{k-1}}^{t_{k}}\frac{\partial_\eta
u(\cdot,\eta)d\eta}{(t_{n-\frac{1}{2}}-\eta)^\alpha}\nonumber\\
&+\frac{1}{\Gamma(1-\alpha)}\int_{t_{n-1}}^{t_{n-\frac{1}{2}}}\frac{\partial_\eta u(\cdot,\eta)d\eta}{(t_{n-\frac{1}{2}}-\eta)^\alpha},
\end{align}
where $u(\cdot,t)$ denotes $u(x_i,y_j,t).$ On each interval $[t_{k-1},t_k](1\leq k\leq n-1)$, denoting the quadratic interpolation
$\Pi_{2,k}u(\cdot,t)$ of $u(\cdot,t)$ using three points $(t_{k-1},u(\cdot,t_{k-1}))$, $(t_{k},u(\cdot,t_{k}))$ and
$(t_{k+1},u(\cdot,t_{k+1}))$, we get
\begin{align}
\Pi_{2,k}u(\cdot,t)&= u(\cdot,t_{k-1})\frac{(t-t_k)(t-t_{k+1})}{2\tau^2}-u(\cdot,t_{k})
\frac{(t-t_{k-1})(t-t_{k+1})}{\tau^2}\nonumber\\\label{e12}
&+u(\cdot,t_{k+1})\frac{(t-t_{k-1})(t-t_{k})}{2\tau^2},
\end{align}
on the interval $[t_{n-1},t_{n-\frac{1}{2}}]$, denoting the linear interpolation $\Pi_{1,n}u(\cdot,t)$ of $u(\cdot,t)$ using two points
$(t_{n-1},u(\cdot,t_{n-1}))$ and $(t_{n},u(\cdot,t_{n}))$, we get
\begin{equation} \label{e13}
\Pi_{1,n}u(\cdot,t)= u(\cdot,t_{n})\frac{(t-t_n)}{\tau}-u(\cdot,t_{n-1})
\frac{(t-t_{n-1})}{\tau},
\end{equation}
similar to the calculation in \cite{Alikhanov1}, we obtain the difference analog of the Caputo fractional derivative of the order
$\alpha~(0<\alpha<1)$ for the function $u(x,y,t)$ in the following form:
\begin{equation}
   \begin{aligned}
D_t^{\alpha}u(\cdot,t_{n-\frac{1}{2}}) &=\frac{1}{\Gamma(1-\alpha)}\sum_{k=1}^{n-1}\int_{t_{k-1}}^{t_{k}}
\frac{\partial_\eta(\Pi_{2,k}u(\cdot,\eta))d\eta}{(t_{n-\frac{1}{2}}-\eta)^\alpha}\\\label{e14}
&+\frac{1}{\Gamma(1-\alpha)}\int_{t_{n-1}}^{t_{n-\frac{1}{2}}}
\frac{\partial_\eta(\Pi_{1,n}u(\cdot,\eta))d\eta}{(t_{n-\frac{1}{2}}-\eta)^\alpha}+R_1\\
   & =\frac{\tau^{-\alpha}}{\Gamma(2-\alpha)}
\Big[c_0^{(\alpha)}u_{ij}^n-\sum_{k=1}^{n-1}(c_{n-k-1}^{(\alpha)}-c_{n-k}^{(\alpha)})u_{ij}^k-
c_{n-1}^{(\alpha)}u_{ij}^0\Big]+R_1\\
   & =\frac{\tau^{1-\alpha}}{\Gamma(2-\alpha)}\sum^n_{k=1}c_{n-k}^{(\alpha)}\nabla_tu_{ij}^k+R_1,
    \end{aligned}
\end{equation}
where $R_1$ is the error, and for $n=1$
\begin{equation} \label{e15}
c_{0}^{(\alpha)}=a_0^{(\alpha)},
\end{equation}
for $n\geq2$,
\begin{equation} \label{e16}
 c_{k}^{(\alpha)}=\left\{
   \begin{aligned}
   & a_0^{(\alpha)}+b_1^{(\alpha)}, \qquad\qquad k=0,\\
   & a_k^{(\alpha)}+b_{k+1}^{(\alpha)}-b_{k}^{(\alpha)}, ~~1\leq k\leq n-2,\\
   & a_k^{(\alpha)}-b_{k}^{(\alpha)}, \qquad\qquad k=n-1,
    \end{aligned}
  \right.
\end{equation}
where $
  a_0^{(\alpha)} =(\frac{1}{2})^{1-\alpha}, a_k^{(\alpha)}=(k+\frac{1}{2})^{1-\alpha}-(k-\frac{1}{2})^{1-\alpha},k\geq1,$
and $ b_k^{(\alpha)} =\frac{1}{2-\alpha}\Big[(k+\frac{1}{2})^{2-\alpha}-(k-\frac{1}{2})^{2-\alpha}\Big]
-\frac{1}{2}\Big[(k+\frac{1}{2})^{1-\alpha}+(k-\frac{1}{2})^{1-\alpha}\Big],k\geq1.$

\begin{lemma}\label{lm1}
For any $0<\alpha<1$ and $u\in C^{0,0,3}_{x,y,t}(\Omega\times(0,T])$, the error

\begin{equation} \label{e17}
|R_1|=|D_t^{\alpha}u(\cdot,t_{n-\frac{1}{2}})
-\frac{\tau^{1-\alpha}}{\Gamma(2-\alpha)}\sum^n_{k=1}c_{n-k}^{(\alpha)}\nabla_tu_{ij}^k|=O(\tau^{2-\alpha}),
\end{equation}

\end{lemma}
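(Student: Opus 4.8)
The plan is to write the remainder as $R_1 = R_1^{(1)} + R_1^{(2)}$, separating the contribution of the quadratic interpolation on $[0,t_{n-1}]$ from that of the linear interpolation on the final half-interval $[t_{n-1},t_{n-\frac{1}{2}}]$:
$$R_1^{(1)} = \frac{1}{\Gamma(1-\alpha)}\sum_{k=1}^{n-1}\int_{t_{k-1}}^{t_k}\frac{\partial_\eta\bigl(u(\cdot,\eta)-\Pi_{2,k}u(\cdot,\eta)\bigr)}{(t_{n-\frac{1}{2}}-\eta)^\alpha}\,d\eta,$$
$$R_1^{(2)} = \frac{1}{\Gamma(1-\alpha)}\int_{t_{n-1}}^{t_{n-\frac{1}{2}}}\frac{\partial_\eta\bigl(u(\cdot,\eta)-\Pi_{1,n}u(\cdot,\eta)\bigr)}{(t_{n-\frac{1}{2}}-\eta)^\alpha}\,d\eta.$$
Since the weak singularity sits at $\eta=t_{n-\frac{1}{2}}$, the crux is to bound the two interpolation-derivative errors and then integrate them against the integrable kernel $(t_{n-\frac{1}{2}}-\eta)^{-\alpha}$.

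For $R_1^{(2)}$ I would first establish that the linear interpolant contributes only a first-order derivative error. Writing $e=u-\Pi_{1,n}u$ and using $\partial_\eta\Pi_{1,n}u=(u(\cdot,t_n)-u(\cdot,t_{n-1}))/\tau=\partial_\eta u(\cdot,\zeta)$ for some interior $\zeta$ by the mean value theorem, one gets $|\partial_\eta e|=|\partial_\eta u(\cdot,\eta)-\partial_\eta u(\cdot,\zeta)|\le \tau\,\|\partial_t^2 u\|_\infty$. Integrating against the kernel over the half-step gives $\int_{t_{n-1}}^{t_{n-\frac{1}{2}}}(t_{n-\frac{1}{2}}-\eta)^{-\alpha}\,d\eta=(\tau/2)^{1-\alpha}/(1-\alpha)$, so $|R_1^{(2)}|\le C\tau\cdot\tau^{1-\alpha}=O(\tau^{2-\alpha})$. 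This term is the dominant one and fixes the claimed order.

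For $R_1^{(1)}$ I expect the decisive step to be the bound $|\partial_\eta(u-\Pi_{2,k}u)|\le C\tau^2\|\partial_t^3 u\|_\infty$ on each $[t_{k-1},t_k]$. I would obtain it without differentiating the usual $\eta$-dependent Lagrange remainder, which is awkward: since $e=u-\Pi_{2,k}u$ vanishes at the three nodes, Rolle's theorem supplies a zero $\zeta_1\in(t_{k-1},t_k)$ of $\partial_\eta e$; the (constant) second derivative of the quadratic interpolant equals $\partial_t^2 u$ at some interior point by the mean value theorem for divided differences, whence $|\partial_\eta^2 e|\le 2\tau\|\partial_t^3 u\|_\infty$; integrating $\partial_\eta^2 e$ from $\zeta_1$ then yields the $O(\tau^2)$ bound. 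Summing over $k$ and using $\sum_{k=1}^{n-1}\int_{t_{k-1}}^{t_k}(t_{n-\frac{1}{2}}-\eta)^{-\alpha}\,d\eta=\int_0^{t_{n-1}}(t_{n-\frac{1}{2}}-\eta)^{-\alpha}\,d\eta\le T^{1-\alpha}/(1-\alpha)$ gives $|R_1^{(1)}|\le C\tau^2 T^{1-\alpha}/\Gamma(2-\alpha)=O(\tau^2)$.

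Combining the two estimates, $|R_1|\le |R_1^{(1)}|+|R_1^{(2)}|=O(\tau^2)+O(\tau^{2-\alpha})=O(\tau^{2-\alpha})$, since $\tau^2\le\tau^{2-\alpha}$ for small $\tau$. The main obstacle is the derivative interpolation estimate for the quadratic piece, namely controlling $\partial_\eta e$ rather than $e$ itself, together with keeping all constants uniform in $\alpha$ (and bounded as $\alpha\to 1^-$ through the factor $1/\Gamma(2-\alpha)$). Once that is in hand the singular kernel is harmless, because evaluating at $t_{n-\frac{1}{2}}$ keeps every quadratic subinterval at distance at least $\tau/2$ from the singularity, so even the nearest interval contributes only $O(\tau^{1-\alpha})$ after integration.
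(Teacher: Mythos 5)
Your proposal is correct and follows essentially the same route as the paper: the identical splitting of $R_1$ into the quadratic-interpolation error over $[0,t_{n-1}]$ and the linear-interpolation error over the final half-interval $[t_{n-1},t_{n-\frac{1}{2}}]$, with the latter integrated against the kernel to produce the dominant $O(\tau^{2-\alpha})$ term. The only (harmless) deviation is in the quadratic part, where the paper invokes Alikhanov's sharper bound $O(\tau^{3-\alpha})$ while you derive a self-contained $O(\tau^{2})$ bound via Rolle's theorem; both are dominated by $\tau^{2-\alpha}$, so the conclusion is unaffected.
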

\begin{proof}
Let $R_1=R_1^{n-1}+R_{n-1}^{n-\frac{1}{2}}$, where
\begin{align*}
R_1^{n-1}&=\frac{1}{\Gamma(1-\alpha)}\sum_{k=1}^{n-1}\int_{t_k}^{t_{k+1}}
\frac{\partial_\eta(u(\cdot,\eta)-\Pi_{2,k}u(\cdot,\eta)) d\eta}{(t_{n-\frac{1}{2}}-\eta)^\alpha},\\
R_{n-1}^{n-\frac{1}{2}}&=\frac{1}{\Gamma(1-\alpha)}\int_{t_{n-1}}^{t_{n-\frac{1}{2}}}
\frac{\partial_\eta(u(\cdot,\eta)-\Pi_{1,n}u(\cdot,\eta))d\eta}{(t_{n-\frac{1}{2}}-\eta)^\alpha},
\end{align*}
we estimate the error $R_1^{n-1}$ similarly to \cite{Alikhanov1}:
$$R_1^{n-1}\leq \frac{2^\alpha|\partial_t^{3}u(\cdot,\xi)|}{3\Gamma(1-\alpha)}\tau^{3-\alpha},$$
and
\begin{align*}
R_{n-1}^{n-\frac{1}{2}}&=\frac{1}{\Gamma(1-\alpha)}\int_{t_{n-1}}^{t_{n-\frac{1}{2}}}
\frac{(\partial_\eta u(\cdot,\eta)-\frac{u(\cdot,t_n)-u(\cdot,t_{n-1})}{\tau})d\eta}{(t_{n-\frac{1}{2}}-\eta)^\alpha}\\
&=\frac{1}{\Gamma(1-\alpha)}\int_{t_{n-1}}^{t_{n-\frac{1}{2}}}
\frac{(\partial_\eta u(\cdot,t_{n-\frac{1}{2}})-\frac{u(\cdot,t_n)-u(\cdot,t_{n-1})}{\tau})d\eta}{(t_{n-\frac{1}{2}}-\eta)^\alpha}
+O(\tau^{2-\alpha})\\
&=O(\tau^{2-\alpha}),
\end{align*}
then the error $|R_1|=O(\tau^{2-\alpha})$, Lemma 1 is proved.
\end{proof}
The new discretization of ${D_t^\alpha}u$ $(0<\alpha<1)$ will help us improve the temporal convergence accuracy of the Eq.({\ref{e1}}) from $\tau$ \cite{Fengf} to
$\tau^{\min\{3-\gamma_l,2-\alpha_m,2-\beta_r\}}$.

\begin{lemma} \label{lm2}

The coefficients $a_k^{(\alpha)}(k=0,1,2,\ldots,n)$ and $b_k^{(\alpha)}(k=1,2,\ldots,n)$ satisfy the following properties:
\begin{itemize}
\item[(1)] $a_k^{(\alpha)}>0,\lim\limits_{k\rightarrow\infty}a_k^{(\alpha)}=0$;~ $a_k^{(\alpha)}>a_{k+1}^{(\alpha)}, k\geq1$;~
    $a_{k+1}^{(\alpha)}-2a_k^{(\alpha)}+a_{k-1}^{(\alpha)}\geq0,$ $k\geq2$,
\item[(2)] $b_{k}^{(\alpha)}>0;$ ~$\lim\limits_{k\rightarrow\infty}b_k^{(\alpha)}=0$;~ $b_{k}^{(\alpha)}>b_{k+1}^{(\alpha)}.$
\end{itemize}

\begin{proof}
Similar to the proof in \cite{Alikhanov1}.
\end{proof}

\end{lemma}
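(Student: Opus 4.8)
The plan is to reduce every assertion to elementary monotonicity and convexity properties of the single smooth function $g(x)=x^{1-\alpha}$ on $(0,\infty)$. Since $0<\alpha<1$, a direct computation gives $g'(x)=(1-\alpha)x^{-\alpha}>0$, $g''(x)=-\alpha(1-\alpha)x^{-\alpha-1}<0$, and $g'''(x)=\alpha(1-\alpha)(1+\alpha)x^{-\alpha-2}>0$; thus $g$ is increasing, concave, and has a strictly positive third derivative. All the claimed properties of $a_k^{(\alpha)}$ and $b_k^{(\alpha)}$ then follow by rewriting each quantity as a finite difference (or as a quadrature error) of $g$ and invoking the mean value theorem for the corresponding divided difference.

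For the sequence $a_k^{(\alpha)}$ I would first note that $a_k^{(\alpha)}=g(k+\tfrac{1}{2})-g(k-\tfrac{1}{2})$ for $k\ge 1$. Positivity is immediate from $g'>0$, and since the mean value theorem gives $a_k^{(\alpha)}=g'(\xi_k)=(1-\alpha)\xi_k^{-\alpha}$ with $\xi_k\in(k-\tfrac{1}{2},k+\tfrac{1}{2})$, the limit $a_k^{(\alpha)}\to 0$ follows at once. For the strict monotonicity I would compute the second difference $a_k^{(\alpha)}-a_{k+1}^{(\alpha)}=-\big[g(k+\tfrac{3}{2})-2g(k+\tfrac{1}{2})+g(k-\tfrac{1}{2})\big]=-g''(\eta_k)>0$, using $g''<0$. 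Finally, the convexity statement is a third difference: $a_{k+1}^{(\alpha)}-2a_k^{(\alpha)}+a_{k-1}^{(\alpha)}=g(k+\tfrac{3}{2})-3g(k+\tfrac{1}{2})+3g(k-\tfrac{1}{2})-g(k-\tfrac{3}{2})=g'''(\zeta_k)>0$ for some $\zeta_k$, which is in fact strictly positive and so certainly nonnegative (valid for $k\ge 2$, where all nodes stay positive).

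The sequence $b_k^{(\alpha)}$ I would treat through quadrature. Observing that $\frac{1}{2-\alpha}x^{2-\alpha}$ is an antiderivative of $g$, I can rewrite $b_k^{(\alpha)}=\int_{k-1/2}^{\,k+1/2} g(t)\,dt-\tfrac{1}{2}\big[g(k-\tfrac{1}{2})+g(k+\tfrac{1}{2})\big]$, which is precisely the error of the trapezoidal rule on $[k-\tfrac{1}{2},k+\tfrac{1}{2}]$. The classical remainder formula then yields $b_k^{(\alpha)}=-\tfrac{1}{12}g''(\xi_k)>0$ (using $g''<0$), and writing this as $\frac{\alpha(1-\alpha)}{12}\xi_k^{-\alpha-1}$ with $\xi_k\to\infty$ shows $b_k^{(\alpha)}\to 0$.

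The monotonicity $b_k^{(\alpha)}>b_{k+1}^{(\alpha)}$ is the step I expect to be the main obstacle, because the point $\xi_k$ in the remainder formula moves with $k$, so one cannot conclude monotonicity from the pointwise expression alone. To handle this I would pass to the continuous interpolant $B(x)=\int_{x-1/2}^{\,x+1/2} g(t)\,dt-\tfrac{1}{2}\big[g(x-\tfrac{1}{2})+g(x+\tfrac{1}{2})\big]$ for $x\ge 1$, so that $b_k^{(\alpha)}=B(k)$. Differentiating via Leibniz' rule gives $B'(x)=\big[g(x+\tfrac{1}{2})-g(x-\tfrac{1}{2})\big]-\tfrac{1}{2}\big[g'(x-\tfrac{1}{2})+g'(x+\tfrac{1}{2})\big]$, which is itself the trapezoidal error for $\int_{x-1/2}^{\,x+1/2} g'(t)\,dt$ and hence equals $-\tfrac{1}{12}g'''(\eta_x)<0$. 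Thus $B$ is strictly decreasing, so $b_k^{(\alpha)}=B(k)>B(k+1)=b_{k+1}^{(\alpha)}$, which completes the argument and explains why the paper can legitimately defer to the analogous treatment in \cite{Alikhanov1}.
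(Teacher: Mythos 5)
Your proposal is correct and complete. The paper itself offers no argument here --- it simply defers to Alikhanov's paper --- and what you have written is, in substance, the standard proof that reference uses: every claimed sign is the sign of a derivative of $g(x)=x^{1-\alpha}$ read off from a finite difference. Where Alikhanov (and this paper's own proof of Lemma~3) writes the second and third differences as iterated integrals of the form $\int_0^1\!\cdots\!\int_0^1 g^{(m)}(k+z_1+\cdots+z_m-\tfrac{m}{2})\,dz_1\cdots dz_m$ and concludes from the sign of the integrand, you invoke the mean value theorem for divided differences; these are the same estimate in different clothing. Your identification of $b_k^{(\alpha)}$ as the trapezoidal-rule error for $\int_{k-1/2}^{k+1/2} g$ is a slightly slicker packaging than Alikhanov's double-integral representation, and your treatment of the monotonicity $b_k^{(\alpha)}>b_{k+1}^{(\alpha)}$ via the continuous interpolant $B(x)$ and $B'(x)=-\tfrac{1}{12}g'''(\eta_x)<0$ correctly isolates the one step that does \emph{not} follow from the pointwise remainder formula alone (since the intermediate point $\xi_k$ moves with $k$) --- this is exactly the point a careless reader would gloss over. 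Two trivial remarks: the positivity claim also covers $a_0^{(\alpha)}=(1/2)^{1-\alpha}>0$, which falls outside your difference representation but is immediate; and you are right to restrict the monotonicity of $a_k^{(\alpha)}$ to $k\geq 1$, since $a_0^{(\alpha)}<a_1^{(\alpha)}$ can occur for small $\alpha$, consistent with the lemma's statement.
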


\begin{lemma} \label{lm3}

The coefficients $c_k^{(\alpha)}(k=0,1,2,\ldots,n-1)$ satisfy the following properties:
\begin{itemize}
\item[(1)] $c_k^{(\alpha)}>0$, $\lim\limits_{k\rightarrow\infty}c_k^{(\alpha)}=0$,
\item[(2)] $c_0^{(\alpha)}>c_1^{(\alpha)}>\cdots>c_{n-2}^{(\alpha)}>c_{n-1}^{(\alpha)}$,
\item[(3)] $c_{k+1}^{(\alpha)}-2c_k^{(\alpha)}+c_{k-1}^{(\alpha)}>0,~k\geq2,$$~k\neq n-2.$
\end{itemize}

\begin{proof}
\par Similar to the proof in \cite{Alikhanov1}, the properties (1)-(2) are easy to get. Let us prove the property (3).
\par For $k\geq2$ and $k+1\neq n-1$ we have
 \begin{align*}
 &{c_{k+1}^{(\alpha)}-2c_k^{(\alpha)}+c_{k-1}^{(\alpha)}=
 (a_{k+1}^{(\alpha)}+b_{k+2}^{(\alpha)}-b_{k+1}^{(\alpha)})-2(a_{k}^{(\alpha)}+b_{k+1}^{(\alpha)}-b_{k}^{(\alpha)})+(a_{k-1}^{(\alpha)}+b_{k}^{(\alpha)}-b_{k-1}^{(\alpha)})}\\
 &=\frac{1}{2-\alpha}
 \big[(k+\frac{5}{2})^{2-\alpha}-4(k+\frac{3}{2})^{2-\alpha}+6(k+\frac{1}{2})^{2-\alpha}
 -4(k-\frac{1}{2})^{2-\alpha}+(k-\frac{3}{2})^{2-\alpha}\big]\\
 &-\frac{1}{2}
 \big[(k+\frac{5}{2})^{1-\alpha}-4(k+\frac{3}{2})^{1-\alpha}+6(k+\frac{1}{2})^{1-\alpha}
 -4(k-\frac{1}{2})^{1-\alpha}+(k-\frac{3}{2})^{1-\alpha}\big]\\
 &=\alpha(1-\alpha)(1+\alpha)\int_0^1dz_1\int_0^1dz_2\int_0^1dz_3\int_0^1\frac{dz_4}{(k-\frac{3}{2}+z_1+z_2+z_3+z_4)^{2+\alpha}}\\
 &+\frac{\alpha(1-\alpha)(1+\alpha)(2+\alpha)}{2}\int_0^1dz_1\int_0^1dz_2\int_0^1dz_3\int_0^1\frac{dz_4}{(k-\frac{3}{2}+z_1+z_2+z_3+z_4)^{3+\alpha}}\\
 &>\alpha(1-\alpha)(1+\alpha)\frac{1}{(k+\frac{5}{2})^{2+\alpha}}+\frac{\alpha(1-\alpha)(1+\alpha)(2+\alpha)}{2}\frac{1}{(k+\frac{5}{2})^{3+\alpha}}>0,
 \end{align*}
formula $h'(z)<0$. Consequently, $c_{n}^{(n+1,\alpha)}>c_{n+1}^{(n+2,\alpha)}, n\geq1$.
the proof is completed.
\end{proof}
\end{lemma}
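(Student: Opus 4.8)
The plan is to reduce every property to the analytic behaviour of the two power functions $f(x)=x^{1-\alpha}$ and $g(x)=x^{2-\alpha}$, using the identity $\frac{1}{2-\alpha}g'(x)=f(x)$. First I would record the representations $a_k^{(\alpha)}=f(k+\frac12)-f(k-\frac12)$ for $k\ge1$ and
\[
b_k^{(\alpha)}=\int_{k-\frac12}^{k+\frac12}f(x)\,dx-\frac12\Big[f(k-\tfrac12)+f(k+\tfrac12)\Big],
\]
so that $b_k^{(\alpha)}$ is exactly the trapezoidal remainder on an interval of unit length; since $f''<0$ on $(0,\infty)$ for $0<\alpha<1$, this already forces $b_k^{(\alpha)}>0$, and together with Lemma~\ref{lm2} it pins down the signs of all first differences of $a_k^{(\alpha)}$ and $b_k^{(\alpha)}$.

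For property (1) I would split according to the piecewise definition of $c_k^{(\alpha)}$. The interior coefficient $c_k^{(\alpha)}=a_k^{(\alpha)}+b_{k+1}^{(\alpha)}-b_k^{(\alpha)}$ is rewritten purely in terms of $f$ and its integral, with the integral terms bounded by the monotonicity of $f$; the two boundary coefficients need short separate estimates, namely $c_0^{(\alpha)}=a_0^{(\alpha)}+b_1^{(\alpha)}>0$, and for $c_{n-1}^{(\alpha)}=a_{n-1}^{(\alpha)}-b_{n-1}^{(\alpha)}$ the bound $b_{n-1}^{(\alpha)}<f(n-\tfrac12)-f(n-\tfrac32)=a_{n-1}^{(\alpha)}$, which follows from $\int_{n-3/2}^{n-1/2}f<f(n-\tfrac12)$ and $\tfrac12[f(n-\tfrac32)+f(n-\tfrac12)]>f(n-\tfrac32)$. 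The limit $c_k^{(\alpha)}\to0$ is then immediate from $a_k^{(\alpha)},b_k^{(\alpha)}\to0$ in Lemma~\ref{lm2}. The monotonicity in (2) is obtained by the same device applied to $c_{k}^{(\alpha)}-c_{k+1}^{(\alpha)}$, which collapses to $(a_k^{(\alpha)}-a_{k+1}^{(\alpha)})$ minus a second difference of $b^{(\alpha)}$.

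The core of the lemma is property (3). For $2\le k\le n-3$ all three coefficients use the interior formula, so substituting $c_j^{(\alpha)}=a_j^{(\alpha)}+b_{j+1}^{(\alpha)}-b_j^{(\alpha)}$ and expanding collapses $c_{k+1}^{(\alpha)}-2c_k^{(\alpha)}+c_{k-1}^{(\alpha)}$ into the centred unit-step fourth differences of $g$ and $f$ (coefficient pattern $1,-4,6,-4,1$). I would then invoke the integral representation of a fourth difference, writing each bracket as
\[
\int_0^1\!\!\int_0^1\!\!\int_0^1\!\!\int_0^1 \phi^{(4)}\Big(k-\tfrac32+z_1+z_2+z_3+z_4\Big)\,dz_1dz_2dz_3dz_4,
\]
with $\phi=g$ for the first bracket (prefactor $\frac{1}{2-\alpha}$) and $\phi=f$ for the second (prefactor $-\frac12$). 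Because $\frac{1}{2-\alpha}g^{(4)}(x)=\alpha(1-\alpha)(1+\alpha)x^{-(2+\alpha)}>0$ and $-\frac12 f^{(4)}(x)=\frac{\alpha(1-\alpha)(1+\alpha)(2+\alpha)}{2}x^{-(3+\alpha)}>0$, both contributions are strictly positive, and bounding the integrands below by their value at the right endpoint produces a clean positive lower bound.

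The step I expect to be the genuine obstacle is the interface between the interior and boundary formulas. Convexity is asserted only for $k\ne n-2$ precisely because at $k=n-2$ the term $c_{n-1}^{(\alpha)}=a_{n-1}^{(\alpha)}-b_{n-1}^{(\alpha)}$ no longer matches the pattern, so the tidy fourth-difference cancellation fails and that index is simply omitted. Consequently the monotonicity endpoint $c_{n-2}^{(\alpha)}>c_{n-1}^{(\alpha)}$ cannot be read off from the interior argument: here $c_{n-2}^{(\alpha)}-c_{n-1}^{(\alpha)}=(a_{n-2}^{(\alpha)}-a_{n-1}^{(\alpha)})+2b_{n-1}^{(\alpha)}-b_{n-2}^{(\alpha)}$ has an ambiguous-sign part, so I would instead compare the interior and boundary expressions directly, isolate the $n$-dependence of the last coefficient, and show the relevant auxiliary function is monotone decreasing (the $h'(z)<0$ computation), thereby securing the endpoint inequality and closing the monotonicity chain.
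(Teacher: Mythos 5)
Your proposal is correct and follows essentially the same route as the paper: for property (3) you expand $c_{k+1}^{(\alpha)}-2c_k^{(\alpha)}+c_{k-1}^{(\alpha)}$ via the interior formula into the $1,-4,6,-4,1$ fourth differences of $x^{2-\alpha}$ and $x^{1-\alpha}$, pass to the quadruple-integral representation of the fourth difference, and bound the positive integrands below at the right endpoint $k+\frac52$ --- exactly the paper's computation, including the reason $k=n-2$ must be excluded and the separate monotone-function argument (the $h'(z)<0$ step) for the endpoint inequality $c_{n-2}^{(\alpha)}>c_{n-1}^{(\alpha)}$. The only difference is that you spell out properties (1)--(2) via the trapezoidal-remainder interpretation of $b_k^{(\alpha)}$, where the paper simply defers to Alikhanov; that is a harmless elaboration of the same underlying facts.
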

\textbf{Remark 1:} When $c_{2}^{(\alpha)}-2c_1^{(\alpha)}+c_{0}^{(\alpha)}$ is a formula of parameter $\alpha$, but it is not
 always greater than 0, by simple algebraic calculation, when $0<\alpha<0.4471$,\quad $c_{2}^{(\alpha)}-2c_1^{(\alpha)}+c_{0}^{(\alpha)}<0$, else
$c_{2}^{(\alpha)}-2c_1^{(\alpha)}+c_{0}^{(\alpha)}>0$.
\par Similarly, for the time fractional derivative ${D_t^\beta}u$ $(0<\beta<1)$, we use the following formula
\begin{equation}
\begin{aligned}\label{e18}
{D_t^\beta}u(\cdot,t_{n-\frac{1}{2}})&=\frac{\tau^{-\beta}}{\Gamma(2-\beta)}
\Big[c_0^{(\beta)}u_{ij}^n
-\sum_{k=1}^{n-1}(c_{n-k-1}^{(\beta)}-c_{n-k}^{(\beta)})u_{ij}^k-
c_{n-1}^{(\beta)}u_{ij}^0\Big]+R_2,\\
&=\frac{\tau^{1-\beta}}{\Gamma(2-\beta)}
\sum^n_{k=1}c_{n-k}^{(\beta)}\nabla_tu_{ij}^k
+R_2,
\end{aligned}
\end{equation}
where $R_2=O(\tau^{2-\beta})$ and the coefficients are similar to Lemma 2 and Lemma 3.

\par Next, we will give the new scheme for the fractional derivative ${D_t^\beta}(\partial _x^2+\partial_y^2)u,$ $0<\beta<1$.
\par Since
\begin{align*}
{D_t^\beta}(\partial _x^2+\partial_y^2)u(\cdot,t_{n-\frac{1}{2}})&=(\delta_x^2+\delta_y^2)u(\cdot,t_{n-\frac{1}{2}})\\
&-\frac{h_x^2}{12}\frac{\partial^4u(\xi_i,y_j,t_{n-\frac{1}{2}})}{\partial x^4}
-\frac{h_y^2}{12}\frac{\partial^4u(x_i,\varsigma_j,t_{n-\frac{1}{2}})}{\partial y^4},
\end{align*}
where $x_{i-1}\leq \xi_i\leq x_i, y_{j-1}\leq \varsigma_j\leq y_j$, then we have
\begin{align*}
{D_t^\beta}(\partial _x^2+\partial_y^2)u(\cdot,t_{n-\frac{1}{2}})&=\frac{\tau^{-\beta}}{\Gamma(2-\beta)}
\Big[c_0^{(\beta)}(\partial _x^2+\partial_y^2)u(\cdot,t_{n})\\
&-\sum_{k=1}^{n-1}(c_{n-k-1}^{(\beta)}-c_{n-k}^{(\beta)})(\partial _x^2+\partial_y^2)u(\cdot,t_{k})\\
&-c_{n-1}^{(\beta)}(\partial _x^2+\partial_y^2)u(\cdot,0)\Big]+R_3,\\
& =\frac{\tau^{-\beta}}{\Gamma(2-\beta)}
\Big[c_0^{(\beta)}(\delta_x^2+\delta_y^2)u(\cdot,t_{n})\\
&-\sum_{k=1}^{n-1}(c_{n-k-1}^{(\beta)}-c_{n-k}^{(\beta)})(\delta_x^2+\delta_y^2)u(\cdot,t_{k})\\
&-c_{n-1}^{(\beta)}(\delta_x^2+\delta_y^2)u(\cdot,0)\Big]+R_4+R_3,
\end{align*}
where $|R_3|\leq C\tau^{2-\beta}$ and
\begin{align*}
R_4&=-\frac{h_x^2\tau^{-\beta}}{12\Gamma(2-\beta)}\sum_{k=0}^{n-1}c_{k}^{(\beta)}
\Big[\frac{\partial^4u(\xi_i,y_j,t_{n-k})}{\partial x^4}-\frac{\partial^4u(\xi_i,y_j,t_{n-k-1})}{\partial x^4}\Big]\\
&-\frac{h_y^2\tau^{-\beta}}{12\Gamma(2-\beta)}\sum_{k=0}^{n-1}c_{k}^{(\beta)}
\Big[\frac{\partial^4u(x_i,\varsigma_j,t_{n-k})}{\partial y^4}-\frac{\partial^4w(x_i,\varsigma_j,t_{n-k-1})}{\partial y^4}\Big]\\
&=-\frac{h_x^2\tau^{1-\beta}}{12\Gamma(2-\beta)}
\sum_{k=0}^{n-1}c^{(\beta)}_{k}\frac{\partial^5u(\xi_i,y_j,\eta_{n-k})}{\partial x^4\partial t}-\frac{h_y^2\tau^{1-\beta}}{12\Gamma(2-\beta)}
\sum_{k=0}^{n-1}c^{(\beta)}_{k}\frac{\partial^5u(x_i,\varsigma_j,\upsilon_{n-k})}{\partial y^4\partial t},
\end{align*}
where $t_{n-k-1}<\eta_{n-k},\upsilon_{n-k}<t_{n-k}$.
\begin{align*}
|R_4|&\leq \frac{\tau^{1-\beta}}{12\Gamma(2-\beta)}\big(h_x^2
\max_{\substack{(x,y)\in\Omega\\t\in(0,T]}}\bigg|\frac{\partial^5u(\xi_i,y_j,\eta_{n-k})}{\partial x^4\partial
t}\bigg|+h_y^2\max_{\substack{(x,y)\in\Omega\\t\in(0,T]}}\bigg|\frac{\partial^5u(x_i,\varsigma_j,\upsilon_{n-k})}{\partial y^4\partial
t}\bigg|\big)
\sum_{k=0}^{n-1}c_{k}^{(\beta)}\\
&=\frac{\tau^{1-\beta}}{12\Gamma(2-\beta)}\big(h_x^2
\max_{\substack{(x,y)\in\Omega\\t\in(0,T]}}\bigg|\frac{\partial^5u(\xi_i,y_j,\eta_{n-k})}{\partial x^4\partial
t}\bigg|+h_y^2\max_{\substack{(x,y)\in\Omega\\t\in(0,T]}}\bigg|\frac{\partial^5u(x_i,\varsigma_j,\upsilon_{n-k})}{\partial y^4\partial
t}\bigg|\big)
\sum_{k=0}^{n-1}a_{k}^{(\beta)}\\
&=\frac{\tau^{1-\beta}}{12\Gamma(2-\beta)}\big(h_x^2
\max_{\substack{(x,y)\in\Omega\\t\in(0,T]}}\bigg|\frac{\partial^5u(\xi_i,y_j,\eta_{n-k})}{\partial x^4\partial
t}\bigg|+h_y^2\max_{\substack{(x,y)\in\Omega\\t\in(0,T]}}\bigg|\frac{\partial^5u(x_i,\varsigma_j,\upsilon_{n-k})}{\partial y^4\partial
t}\bigg|\big)
\cdot (n-\frac{1}{2})^{1-\beta}\\
&\leq \frac{T^{1-\beta}}{12\Gamma(2-\beta)}\big(h_x^2
\max_{\substack{(x,y)\in\Omega\\t\in(0,T]}}\bigg|\frac{\partial^5u(\xi_i,y_j,\eta_{n-k})}{\partial x^4\partial
t}\bigg|+h_y^2\max_{\substack{(x,y)\in\Omega\\t\in(0,T]}}\bigg|\frac{\partial^5u(x_i,\varsigma_j,\upsilon_{n-k})}{\partial y^4\partial
t}\bigg|\big),
\end{align*}
then we have
\begin{equation}\label{e19}
   \begin{aligned}
{D_t^\beta}(\partial
_x^2+\partial_y^2)u(\cdot,t_{n-\frac{1}{2}})=\frac{\tau^{1-\beta}}{\Gamma(2-\beta)}\sum^n_{k=1}c_{n-k}^{(\beta)}\nabla_t(\delta_x^2+\delta_y^2)u_{ij}^k
+R_5,
    \end{aligned}
\end{equation}
where $R_5\leq C(\tau^{2-\beta}+h_x^2+h_y^2)$.

\begin{lemma} \label{lm4}
For $0<\alpha<1$, $c_k^{(\alpha)}$ is defined as (15),(16), and for any positive integer $N$ and real vector
$\mathbf{Q}=(v^1,v^2,\ldots,v^{N-1},v^{N})\in R^{N+1}$, we have
\begin{equation} \label{e20}
\sum_{n=1}^{N}\sum_{k=1}^{n} c_{n-k}^{(\alpha)}\,v^kv^n\geq0.
\end{equation}

Analyse: This is the vital Lemma, which plays an important role in the proof of unconditional stability. In the literature \cite{Lopez}, the
real sequence ${a_0,a_1,\cdots,a_n,\cdots}$ satisfy $a_{n+1}-2a_n+a_{n-1}\geq 0, n=1,2,\cdots$. But from the \textbf{Remark 1},
the coefficients $c_{2}^{(\alpha)}-2c_1^{(\alpha)}+c_{0}^{(\alpha)}$ is not always greater than $0$. Next, we will give the proof that it does not
affect the results. Then similar to Proposition 5.1 \cite{Lopez}, we will first prove the following results.
\par For the coefficients $c_k^{(\alpha)}$ is defined as (16),(17), if
$$\lim\limits_{M\rightarrow\infty}c_{M+1}^{(\alpha)}\sum\limits_{k=0}\limits^{M}\cos (kx)\geq0,$$
 (the limit may be $+\infty$), then
 $$\sum\limits_{k=0}\limits^{\infty}c_k^{(\alpha)}\cos (kx)>0$$
 (and the sum of the series may be $+\infty$).
\begin{proof}
For each nonnegative integer $M$, we define $A_M=\sum\limits_{k=0}\limits^{M}c_k^{(\alpha)}\cos (kx)$, by summing by parts twice, we arrive at
\begin{align*}
A_M&=c_{M+1}^{(\alpha)}\sum\limits_{k=0}\limits^{M}\cos (kx)-
(c_{M+2}^{(\alpha)}-c_{M+1}^{(\alpha)})\sum\limits_{k=0}\limits^{M}\sum\limits_{p=0}\limits^{k}\cos (px)\\
&+\sum\limits_{k=0}\limits^{M}(c_{k+2}^{(\alpha)}-2c_{k+1}^{(\alpha)}+c_{k}^{(\alpha)})\\
&+\sum\limits_{k=1}\limits^{M}(c_{k+2}^{(\alpha)}-2c_{k+1}^{(\alpha)}+c_{k}^{(\alpha)})
\sum\limits_{p=1}\limits^{k}\sum\limits_{m=0}\limits^{p}\cos (mx).
\end{align*}
\par From the well-known identity
\begin{equation*}
\sum\limits_{m=0}\limits^{p}\cos (mx)=\left\{
   \begin{aligned}
   & \frac{1-\cos x+\cos(px)-\cos(p
   +1)x}{2(1-\cos x)},~\cos x\neq1,\\
   & p+1, \qquad\qquad\qquad\qquad~~~~~~~~~~~~~~\cos x=1.
    \end{aligned}
  \right.
\end{equation*}
by induction we get
\begin{align*}
\sum\limits_{p=0}\limits^{k}\sum\limits_{m=0}\limits^{p}\cos (mx)&=\left\{
   \begin{aligned}
   & \frac{(k+2)-(k+1)\cos x-\cos(k
   +1)x}{2(1-\cos x)},~\cos x\neq1\\
   & \frac{(k+2)(k+1)}{2}, \qquad\qquad\qquad\qquad~~~~~~\cos x=1
    \end{aligned}
  \right.\\
  &\geq0,
\end{align*}
and
\begin{align*}
\sum\limits_{p=1}\limits^{k}\sum\limits_{m=0}\limits^{p}\cos (mx)&=\left\{
   \begin{aligned}
   & \frac{k-(k-1)\cos x-\cos(k
   +1)x}{2(1-\cos x)},~\cos x\neq1\\
   & \frac{k(k+3)}{2}, \qquad\qquad\qquad\qquad~~~~~~\cos x=1
    \end{aligned}
  \right.\\
   &\geq0.
\end{align*}
\par From the properties of $c_k^{(\alpha)}$ in Lemma 3,
\begin{align*}
\lim\limits_{M\rightarrow\infty}\sum\limits_{k=0}\limits^{M}(c_{k+2}^{(\alpha)}-2c_{k+1}^{(\alpha)}+c_{k}^{(\alpha)})
&=\lim\limits_{M\rightarrow\infty}(c_{0}^{(\alpha)}-c_{1}^{(\alpha)}+c_{M+2}^{(\alpha)}-c_{M+1}^{(\alpha)})\\
&=c_{0}^{(\alpha)}-c_{1}^{(\alpha)}>0,
\end{align*}
it is clear that $\sum\limits_{k=0}\limits^{\infty}c_k^{(\alpha)}\cos (kx)>0.$
\end{proof}
 Next one makes the same proof to Proposition 5.2 \cite{Lopez}, (21) is proved.

\end{lemma}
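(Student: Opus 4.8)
The plan is to deduce (\ref{e20}) from the nonnegativity of the generating cosine series $C(x):=\sum_{k=0}^{\infty}c_k^{(\alpha)}\cos(kx)$, in the spirit of Proposition~5.2 of \cite{Lopez}. First I would record the Parseval-type identity linking the one-sided quadratic form to this series. Putting $P(x)=\sum_{n=1}^{N}v^n e^{inx}$ and reading off the Fourier coefficients of $C$ — the constant term is $c_0^{(\alpha)}$ and the frequency-$(\pm m)$ term is $c_m^{(\alpha)}/2$ — a direct computation gives
\begin{equation*}
\sum_{n=1}^{N}\sum_{k=1}^{n}c_{n-k}^{(\alpha)}v^kv^n=\frac{1}{2\pi}\int_{-\pi}^{\pi}C(x)\,|P(x)|^2\,dx .
\end{equation*}
Since $|P(x)|^2\ge0$, this reduces the whole inequality to showing $C(x)\ge0$ for every real $x$. (The identity is legitimate because $c_k^{(\alpha)}\sim(1-\alpha)k^{-\alpha}$ by Lemma~\ref{lm3}, so $C$ has only an integrable singularity $C(x)=O(|x|^{\alpha-1})$ at $x=0$ and is the Fourier series of an $L^1$ function.)

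Next I would establish $C(x)\ge0$ by invoking the auxiliary result proved just above, for which it suffices to check the tail hypothesis $\lim_{M\to\infty}c_{M+1}^{(\alpha)}\sum_{k=0}^{M}\cos(kx)\ge0$. Here Lemma~\ref{lm3} supplies $c_k^{(\alpha)}>0$, the monotone decay $c_k^{(\alpha)}\downarrow0$, and (through $a_k^{(\alpha)}$) the asymptotics $c_k^{(\alpha)}\sim(1-\alpha)k^{-\alpha}$. For $\cos x\ne1$ the Dirichlet sum $\sum_{k=0}^{M}\cos(kx)$ stays bounded while $c_{M+1}^{(\alpha)}\to0$, so the limit is $0$; for $x=0$ the sum equals $M+1$ and $c_{M+1}^{(\alpha)}(M+1)\sim(1-\alpha)(M+1)^{1-\alpha}\to+\infty$. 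In either case the hypothesis holds, so the auxiliary result yields $C(x)>0$, and combined with the first paragraph this proves (\ref{e20}).

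The step I expect to be the genuine obstacle is the one flagged in Remark~1: the classical argument of \cite{Lopez} presupposes the full convexity $c_{k+1}^{(\alpha)}-2c_k^{(\alpha)}+c_{k-1}^{(\alpha)}\ge0$ for all $k\ge1$, whereas $c_2^{(\alpha)}-2c_1^{(\alpha)}+c_0^{(\alpha)}$ can be negative for $0<\alpha<0.4471$. The way to absorb this is to carry out the two successive summations by parts on $A_M=\sum_{k=0}^{M}c_k^{(\alpha)}\cos(kx)$ so that the second differences enter only as coefficients of the double cosine partial sums $\sum_{p}\sum_{m}\cos(mx)$, which were shown to be nonnegative (Fej\'er-type kernels), and then to exploit the telescoping $\sum_{k}(c_{k+2}^{(\alpha)}-2c_{k+1}^{(\alpha)}+c_k^{(\alpha)})\to c_0^{(\alpha)}-c_1^{(\alpha)}>0$. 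The delicate point is to show that the finitely many exceptional (possibly negative) second differences are outweighed by this strictly positive telescoped total together with the positivity of the kernels, so that the conclusion $C(x)\ge0$ survives; this is precisely where the treatment must depart from \cite{Lopez}, and it is the part I would spend the most care on.
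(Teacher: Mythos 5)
Your overall route is the same as the paper's: reduce the quadratic form to the nonnegativity of $C(x)=\sum_{k\ge0}c_k^{(\alpha)}\cos(kx)$ via the Parseval identity (Proposition 5.2 of \cite{Lopez}), and obtain $C(x)>0$ from the Proposition-5.1-type auxiliary statement, whose tail hypothesis you verify correctly (bounded Dirichlet sums times $c_{M+1}^{(\alpha)}\to0$ when $\cos x\neq1$, and $c_{M+1}^{(\alpha)}(M+1)\to+\infty$ when $\cos x=1$). That much matches the paper, which likewise only sketches the Proposition 5.2 step.

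The genuine gap is exactly the point you flag and then leave open: how to survive the failure of convexity at $k=0$, i.e.\ the fact that $c_2^{(\alpha)}-2c_1^{(\alpha)}+c_0^{(\alpha)}$ can be negative for $0<\alpha<0.4471$. You propose to show that the "finitely many exceptional negative second differences are outweighed" by the positive telescoped total together with the positive kernels, but you give no estimate, and such a comparison is not obviously available: the Fej\'er-type kernels $\sum_{p}\sum_{m}\cos(mx)$ are unbounded as $x\to0$, so a negative second difference multiplied by a kernel could not in general be absorbed by the fixed constant $c_0^{(\alpha)}-c_1^{(\alpha)}$. The paper's resolution is structural rather than quantitative: in the double summation by parts it splits the second kernel as $\sum_{p=0}^{k}\sum_{m=0}^{p}\cos(mx)=1+\sum_{p=1}^{k}\sum_{m=0}^{p}\cos(mx)$, so that the exceptional difference enters only the pure telescoping sum $\sum_{k=0}^{M}(c_{k+2}^{(\alpha)}-2c_{k+1}^{(\alpha)}+c_{k}^{(\alpha)})\to c_0^{(\alpha)}-c_1^{(\alpha)}>0$, where the sign of any individual term is irrelevant, while every kernel-weighted term occurs only for $k\ge1$, where Lemma \ref{lm3}(3) guarantees a positive coefficient. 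Without this (or an equivalent) device your argument does not close: you have correctly identified the obstacle but not removed it.
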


Next we give the following lemmas \cite{LZL15}.

\begin{lemma} \label{lm5}
\par Discretization of the time fractional derivative ${D_t^\gamma}u(x,y,t)$ $(1<\gamma<2)$. From the results in \cite{LZL15}, at mesh points
$(x_i,y_j,t_{n-\frac{1}{2}})$ we get
\begin{equation}\label{e21}
\begin{aligned}
{D_t^\gamma}u(x_i,y_j,t_{n-\frac{1}{2}})&\approx\frac{1}{2}[D_t^{\gamma}u(x_i,y_j,t_{n})+D_t^{\gamma}u(x_i,y_j,t_{n-1})\big]\\
&=\frac{\tau^{1-\gamma}}{\Gamma(3-\gamma)}
\Big[a_0^{(\gamma)}\nabla_tu_{ij}^n-\sum_{k=1}^{n-1}(a_{n-k-1}^{(\gamma)}-a_{n-k}^{(\gamma)})\nabla_tu_{ij}^k\\
&-a_{n-1}^{(\gamma)}\frac{\partial u(x_i,y_j,0)}{\partial t}\Big]+R_8,
\end{aligned}
\end{equation}
where $R_8=O(\tau^{3-\gamma})$, the coefficients $a_k^{(\gamma)}=(k+1)^{2-\gamma}-k^{2-\gamma}$, $k=0,1,2,\ldots$ and satisfy the following
properties:
\begin{itemize}
\item[(1)] $a_k^{(\gamma)}>0$, $a_0^{(\gamma)}=1$, $a_k^{(\gamma)}>a_{k+1}^{(\gamma)}$, ${\lim\limits_{k\to \infty}}a_k^{(\gamma)}=0$,
\item[(2)] $\sum\limits_{k=0}^{n-1}(a_k^{(\gamma)}-a_{k+1}^{(\gamma)})+a_{n}^{(\gamma)}=1$,
\item[(3)] $(2-\gamma)(k+1)^{1-\gamma}\leq a_k^{(\gamma)}\leq(2-\gamma)k^{1-\gamma}$.
\end{itemize}
\end{lemma}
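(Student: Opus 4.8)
The plan is to obtain the scheme (\ref{e21}) by reducing the order of the operator and then applying a standard $L1$-type discretization. Setting $\alpha=\gamma-1\in(0,1)$ and $w=\partial_t u$, and using the identity $2-\gamma=1-\alpha$, the Caputo derivative collapses to
$$D_t^\gamma u(\cdot,t_n)=\frac{1}{\Gamma(1-\alpha)}\int_0^{t_n}(t_n-s)^{-\alpha}\,\partial_s w(\cdot,s)\,ds=D_t^\alpha w(\cdot,t_n),$$
so the task becomes the $L1$ discretization of an order-$\alpha$ Caputo derivative applied to $w$. On each subinterval $[t_{k-1},t_k]$ I would replace $w$ by its linear interpolant (equivalently $\partial_s w$ by the constant first difference) and integrate the kernel $(t_n-s)^{-\alpha}$ exactly; because $1-\alpha=2-\gamma$, the resulting weight is $\frac{\tau^{1-\alpha}}{1-\alpha}\big[(n-k+1)^{2-\gamma}-(n-k)^{2-\gamma}\big]=\frac{\tau^{1-\alpha}}{1-\alpha}a_{n-k}^{(\gamma)}$, with $a_j^{(\gamma)}=(j+1)^{2-\gamma}-j^{2-\gamma}$. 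Absorbing $\frac{1}{\Gamma(1-\alpha)(1-\alpha)}=\frac{1}{\Gamma(2-\alpha)}=\frac{1}{\Gamma(3-\gamma)}$ and $\tau^{-\alpha}=\tau^{1-\gamma}$ produces the prefactor $\frac{\tau^{1-\gamma}}{\Gamma(3-\gamma)}$ of (\ref{e21}).

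A summation by parts then converts the telescoping sum of first differences into the nodal form:
$$\sum_{k=1}^{n}a_{n-k}^{(\gamma)}\big(w^k-w^{k-1}\big)=a_0^{(\gamma)}w^n-\sum_{k=1}^{n-1}\big(a_{n-k-1}^{(\gamma)}-a_{n-k}^{(\gamma)}\big)w^k-a_{n-1}^{(\gamma)}w^0.$$
This is precisely the bracket in (\ref{e21}) once the interior values of $w=\partial_t u$ are identified with the second-order discrete derivatives $\nabla_t u_{ij}^k$ and the boundary term $w^0$ with the prescribed initial velocity $\partial_t u(\cdot,0)$; the centred average $\frac12[D_t^\gamma u(\cdot,t_n)+D_t^\gamma u(\cdot,t_{n-1})]$ is what places the approximation at $t_{n-\frac12}$. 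For the error I would combine two estimates: the classical $L1$ truncation for an order-$\alpha$ operator gives $O(\tau^{2-\alpha})=O(\tau^{3-\gamma})$ at the integer node, while the centred average introduces an additional $O(\tau^2)$ by Taylor expansion of $D_t^\gamma u$ about $t_{n-\frac12}$; since $1<3-\gamma<2$ the weaker bound dominates and $R_8=O(\tau^{3-\gamma})$.

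The coefficient properties follow from elementary calculus on $x\mapsto x^{2-\gamma}$. Property (2) is the telescoping identity $\sum_{k=0}^{n-1}(a_k^{(\gamma)}-a_{k+1}^{(\gamma)})+a_n^{(\gamma)}=a_0^{(\gamma)}=1$, immediate from $a_0^{(\gamma)}=1^{2-\gamma}-0=1$. For (1) and (3) I would apply the mean value theorem: there is $\xi_k\in(k,k+1)$ with $a_k^{(\gamma)}=(2-\gamma)\xi_k^{1-\gamma}>0$; since $1-\gamma<0$ the map $x\mapsto x^{1-\gamma}$ is strictly decreasing, so $(k+1)^{1-\gamma}\le\xi_k^{1-\gamma}\le k^{1-\gamma}$, which after multiplication by $2-\gamma>0$ yields the two-sided bound (3). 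The same decrease, together with the concavity of $x^{2-\gamma}$ (whose increments shrink), gives the monotonicity $a_k^{(\gamma)}>a_{k+1}^{(\gamma)}$ and the limit $a_k^{(\gamma)}\to0$.

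The delicate step is the $O(\tau^{3-\gamma})$ truncation estimate, because the kernel $(t_n-s)^{-\alpha}$ is weakly singular as $s\to t_n$. On the final interval $[t_{n-1},t_n]$ the interpolation error must be paired with the integrable singularity and bounded separately from the regular intervals, and the claimed order requires $\partial_t^3 u$ to be bounded, consistent with the hypothesis $u\in C^{0,0,3}_{x,y,t}$. I would also verify explicitly that $D_t^\gamma u$ is $C^2$ in $t$ so that the averaging step genuinely contributes only $O(\tau^2)$; this again follows from the regularity assumption but should be recorded, since the whole error order hinges on it.
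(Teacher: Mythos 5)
The paper offers no proof of this lemma at all --- it is quoted from \cite{LZL15} (it is the Sun--Wu-type discretization of the Caputo derivative of order $1<\gamma<2$) --- so you are supplying an argument where the authors give only a citation. Your handling of the coefficient properties is complete and correct: the telescoping identity for (2), and the mean value theorem $a_k^{(\gamma)}=(2-\gamma)\xi_k^{1-\gamma}$ with $\xi_k\in(k,k+1)$ together with the monotonicity of $x\mapsto x^{1-\gamma}$ for (1) and (3). The order reduction $D_t^\gamma u=D_t^{\gamma-1}(\partial_t u)$, the exact integration of the kernel giving the weights $a_{n-k}^{(\gamma)}$, and the summation by parts are also all correct.

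The truncation estimate, however, has a genuine gap at the step where you ``identify'' the nodal values $w^k=\partial_t u(\cdot,t_k)$ with $\nabla_t u_{ij}^k$. These are not equal: $\nabla_t u^k$ is second-order accurate at the half node $t_{k-1/2}$ but only first-order accurate at $t_k$, with $w^k-\nabla_t u^k=\tfrac{\tau}{2}\partial_t^2u(\cdot,t_k)+O(\tau^2)$, while $w^0$ is kept exact. If you first prove the L1 bound at $t_n$ with the exact $w^k$ and then swap in $\nabla_t u^k$, the substitution contributes (after summation by parts) a term of size $\frac{\tau^{1-\gamma}}{\Gamma(3-\gamma)}a_{n-1}^{(\gamma)}\cdot\tfrac{\tau}{2}\partial_t^2u(\cdot,t_1)+\cdots=O(\tau)$, which is strictly worse than $O(\tau^{3-\gamma})$ for every $\gamma\in(1,2)$; your stated error budget ($O(\tau^{3-\gamma})$ from L1 plus $O(\tau^2)$ from the centred average) contains no term for this substitution. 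The point of the lemma is that this $O(\tau)$ discrepancy cancels against the equally $O(\tau)$ discrepancy between $D_t^\gamma u(t_n)$ and the average $\tfrac12[D_t^\gamma u(t_n)+D_t^\gamma u(t_{n-1})]$ (for $u=t^2$ both are $\tau^{2-\gamma}a_{n-1}^{(\gamma)}/\Gamma(3-\gamma)$ and cancel exactly), so the two steps must be estimated jointly: one compares the single sum $\frac{\tau^{1-\gamma}}{\Gamma(3-\gamma)}\sum_{k=1}^n a_{n-k}^{(\gamma)}(\nabla_tu^k-\nabla_tu^{k-1})$ directly with the averaged integrals, using that $(\nabla_tu^k-\nabla_tu^{k-1})/\tau$ is a weighted mean of $\partial_t^2u$ over $[t_{k-2},t_k]$. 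A secondary issue: the $O(\tau^2)$ bound for the centred average needs $\tfrac{d^2}{dt^2}D_t^\gamma u$ bounded, which does not follow from $u\in C^3_t$ alone, since that second derivative contains a term proportional to $t^{-\gamma}\partial_t^2u(\cdot,0)$; so even that step is not automatic.
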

\begin{lemma}[\cite{Fengf}]\label{lm6}
For $1<\gamma<2$, define $a_k^{(\gamma)}=(k+1)^{2-\gamma}-k^{2-\gamma}$, $k=0,1,2,\ldots,n$ and $S=\{S_1,S_2,S_3,\ldots\}$ and $P$, then it
holds that
\begin{align*}
&\frac{\tau^{1-\gamma}}{\Gamma(3-\gamma)}\sum_{n=1}^{N}\Big[a_0^{(\gamma)}S_n-
\sum_{k=1}^{n-1}(a_{n-k-1}^{(\gamma)}-a_{n-k}^{(\gamma)})S_k-
a_{n-1}^{(\gamma)}P\Big]S_n\\
\geq& \frac{T^{1-\gamma}}{2\Gamma(2-\gamma)}\sum_{n=1}^{N}S_n^2
-\frac{T^{2-\gamma}}{2\tau\Gamma(3-\gamma)}P^2,~N=1,2,3,\ldots
\end{align*}
\end{lemma}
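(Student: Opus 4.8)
The plan is to reduce the weighted inequality to a clean, constant-free coercivity estimate for the kernel $\{a_k^{(\gamma)}\}$, and then recover the stated constants by two elementary identities. First I would divide both sides by $\frac{\tau^{1-\gamma}}{\Gamma(3-\gamma)}$ and use $T=N\tau$ together with $\Gamma(3-\gamma)=(2-\gamma)\Gamma(2-\gamma)$; since $T^{1-\gamma}/\tau^{1-\gamma}=N^{1-\gamma}$ and $T^{2-\gamma}/\tau^{2-\gamma}=N^{2-\gamma}$, the claim becomes the equivalent statement
\begin{equation*}
\sum_{n=1}^{N}\Big[a_0^{(\gamma)}S_n-\sum_{k=1}^{n-1}(a_{n-k-1}^{(\gamma)}-a_{n-k}^{(\gamma)})S_k-a_{n-1}^{(\gamma)}P\Big]S_n\geq \frac{(2-\gamma)N^{1-\gamma}}{2}\sum_{n=1}^{N}S_n^2-\frac{N^{2-\gamma}}{2}P^2 .
\end{equation*}

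Next I would rewrite the bracketed operator in difference form. Setting $S_0:=P$ and applying Abel summation, one verifies the identity
\begin{equation*}
a_0^{(\gamma)}S_n-\sum_{k=1}^{n-1}(a_{n-k-1}^{(\gamma)}-a_{n-k}^{(\gamma)})S_k-a_{n-1}^{(\gamma)}P=\sum_{k=1}^{n}a_{n-k}^{(\gamma)}(S_k-S_{k-1}),
\end{equation*}
so the left-hand side is the discrete fractional operator of order $\gamma-1\in(0,1)$ acting on the increments $S_k-S_{k-1}$ and tested against $S_n$. I would then target the sharper core inequality
\begin{equation*}
\sum_{n=1}^{N}\Big[\sum_{k=1}^{n}a_{n-k}^{(\gamma)}(S_k-S_{k-1})\Big]S_n\geq \frac{a_{N-1}^{(\gamma)}}{2}\sum_{n=1}^{N}S_n^2-\frac{1}{2}\Big(\sum_{n=1}^{N}a_{n-1}^{(\gamma)}\Big)P^2 .
\end{equation*}
Once this is available, the theorem follows from two facts. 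The telescoping identity $\sum_{n=1}^{N}a_{n-1}^{(\gamma)}=\sum_{k=0}^{N-1}[(k+1)^{2-\gamma}-k^{2-\gamma}]=N^{2-\gamma}$ produces the $P^2$ coefficient exactly, while property~(3) of Lemma~\ref{lm5}, namely $a_{N-1}^{(\gamma)}\geq(2-\gamma)N^{1-\gamma}$, lowers the diagonal coefficient to the announced $\tfrac{(2-\gamma)N^{1-\gamma}}{2}$. I would also record that the case $N=1$ collapses to the transparent estimate $\tfrac12(S_1-P)^2\ge0$, which confirms that the constants are sharp in the right way.

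The heart of the argument is the core inequality, and this is where I expect the main obstacle. The strategy mirrors the framework of \cite{Lopez} already used for Lemma~\ref{lm4}: the kernel $\{a_k^{(\gamma)}\}$ is positive, strictly decreasing and convex, the convexity $a_{k+1}^{(\gamma)}-2a_k^{(\gamma)}+a_{k-1}^{(\gamma)}\ge0$ being the positive third-order difference of $t^{2-\gamma}$ with $0<2-\gamma<1$, so its discrete cosine symbol $\sum_k a_k^{(\gamma)}\cos(k\theta)$ is nonnegative and the associated Toeplitz form is positive semidefinite. Using summation by parts I would convert the $S_n$-tested form into a difference-tested form $\sum_n\sum_k a_{n-k}^{(\gamma)}w^k w^n$ with $w^k=S_k-S_{k-1}$, which is nonnegative by positive definiteness, plus explicit boundary contributions. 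The delicate point is to peel off the genuinely coercive diagonal piece $\frac{a_{N-1}^{(\gamma)}}{2}\sum_n S_n^2$ and the $S_0=P$ correction $-\frac12(\sum_n a_{n-1}^{(\gamma)})P^2$ while keeping the leftover Toeplitz remainder positive semidefinite; one must check that subtracting the constant-diagonal term does not destroy positive definiteness of the remaining kernel, which again rests on the convexity of $\{a_k^{(\gamma)}\}$ established above. Because the ambient positivity machinery of Lemma~\ref{lm4} and the coefficient bounds of Lemma~\ref{lm5} are already in hand, I expect this final step to be careful bookkeeping of boundary terms rather than a genuinely new idea.
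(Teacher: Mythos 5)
The paper gives no proof of this lemma; it is imported verbatim from \cite{Fengf}, where it is established by an elementary monotonicity argument. Your outer reduction is correct and complete: dividing by $\tau^{1-\gamma}/\Gamma(3-\gamma)$ and using $T=N\tau$ together with $\Gamma(3-\gamma)=(2-\gamma)\Gamma(2-\gamma)$ does give the normalized statement; the identity $a_0^{(\gamma)}S_n-\sum_{k=1}^{n-1}(a_{n-k-1}^{(\gamma)}-a_{n-k}^{(\gamma)})S_k-a_{n-1}^{(\gamma)}P=\sum_{k=1}^{n}a_{n-k}^{(\gamma)}(S_k-S_{k-1})$ with $S_0=P$ checks out; and the two facts $\sum_{n=1}^{N}a_{n-1}^{(\gamma)}=N^{2-\gamma}$ and $a_{N-1}^{(\gamma)}\ge(2-\gamma)N^{1-\gamma}$ (Lemma \ref{lm5}(3)) do convert your ``core inequality'' into the stated one, and your $N=1$ sanity check is right.

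The genuine gap is that the core inequality --- which you yourself identify as the heart of the argument --- is never proved, and the route you sketch for it would not deliver it. The Lopez-Marcos cosine-symbol machinery yields only that the Toeplitz form in the increments $w^k=S_k-S_{k-1}$ is nonnegative; it provides no mechanism for extracting the quantitative coercive term $\frac{a_{N-1}^{(\gamma)}}{2}\sum_{n}S_n^2$, and ``peeling off a constant diagonal while keeping the remainder positive semidefinite'' is exactly the assertion you would need to prove: it does not follow from convexity of $a_k^{(\gamma)}$, and you offer no argument for it. In fact neither convexity nor Fourier positivity is needed. Starting from $L=\sum_{n=1}^{N}\big[a_0^{(\gamma)}S_n-\sum_{k=1}^{n-1}(a_{n-k-1}^{(\gamma)}-a_{n-k}^{(\gamma)})S_k-a_{n-1}^{(\gamma)}P\big]S_n$, bound every off-diagonal product via $-xy\ge-\tfrac12(x^2+y^2)$ (legitimate because $a_{n-k-1}^{(\gamma)}-a_{n-k}^{(\gamma)}>0$ and $a_{n-1}^{(\gamma)}>0$), then use the telescoping identities $\sum_{k=1}^{n-1}(a_{n-k-1}^{(\gamma)}-a_{n-k}^{(\gamma)})=a_0^{(\gamma)}-a_{n-1}^{(\gamma)}$ and $\sum_{n=k+1}^{N}(a_{n-k-1}^{(\gamma)}-a_{n-k}^{(\gamma)})=a_0^{(\gamma)}-a_{N-k}^{(\gamma)}$ to obtain $L\ge\tfrac12 a_0^{(\gamma)}S_N^2+\tfrac12\sum_{k=1}^{N-1}a_{N-k}^{(\gamma)}S_k^2-\tfrac12\big(\sum_{n=1}^{N}a_{n-1}^{(\gamma)}\big)P^2$; monotonicity $a_{N-k}^{(\gamma)}\ge a_{N-1}^{(\gamma)}$ then gives your core inequality. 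This is, in substance, the proof in \cite{Fengf}. Your skeleton is sound and the constants are handled correctly, but the load-bearing estimate is asserted rather than proved, and the positive-definiteness framework of Lemma \ref{lm4} is the wrong instrument for it.
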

\section{The derivation of the difference scheme}

\par To develop a finite difference scheme for the generalized problem (1), we define $\mathcal{V}_h=\{v~|~v~
\text{is~a~grid~function~on}~\Omega_{h}~\text{and}~v_{0j}=v_{M_xj}=v_{i0}=v_{iM_y}=0\}.$
For any $u,v\in \mathcal{V}_h$, we define the following discrete inner products and induced norms:
\begin{align*}
&(u,v)=h_xh_y\sum_{i=1}^{M_x-1}\sum_{i=1}^{M_y-1}u_{ij}v_{ij},\\
&\langle(\nabla_x+\nabla_y)u,(\nabla_x+\nabla_y)v\rangle=h_xh_y\sum_{i=1}^{M_x}\sum_{i=1}^{M_y}(\nabla_x+\nabla_y)u_{ij}\cdot(\nabla_x+\nabla_y)v_{ij},\\
&||v||_0=\sqrt{(v,v)},\quad ||v||_\infty=\max_{\substack{1 \leq i \leq M_x \\ 1\leq j\leq M_y}}{\mid v\mid},\quad
|v|_1=\sqrt{\langle(\nabla_x+\nabla_y)v,(\nabla_x+\nabla_y)v\rangle},
\end{align*}
and
\begin{equation} \label{e22}
||v||_{1}=\sqrt{d_2||v||_0^2+d_3||v||_1^2}.
\end{equation}
Similar to one-dimensional case, it is easy to get the following properties
\begin{align*}
((\delta_x^2+\delta_y^2)v^k,v^{n})=-\langle(\nabla_x+\nabla_y)v^k,(\nabla_x+\nabla_y)v^n\rangle,
\end{align*}
\begin{align} \label{e23}
((\delta_x^2+\delta_y^2)v^{k},\nabla_tv^{n})
&=-\frac{1}{\tau}\langle(\nabla_x+\nabla_y)v^k,(\nabla_x+\nabla_y)v^n-(\nabla_x+\nabla_y)v^{n-1}\rangle\\\label{e24}
(\nabla_t((\delta_x^2+\delta_y^2)v^{k}),\nabla_tv^{n})
&=-\langle\nabla_t((\nabla_x+\nabla_y)v^k),\nabla_t((\nabla_x+\nabla_y)v^{n})\rangle.
\end{align}

\par We define the grid function$f_{ij}^n=f(x_i,y_j,t_n),\varphi_{ij}=\varphi(x_i,y_j),\phi_{ij}=\phi(x_i,y_j),$
where $(x_i,y_j) \in \Omega_h,0\leq n\leq N.$
\par Now, we will present the difference scheme for the two-dimensional problem (\ref{e1}), assume that
$u(x,y,t)\in C_{x,y,t}^{4,4,3}(\Omega\times(0,T])$,we have
\begin{align}\nonumber
&{\sum_{l=1}^pa_lD^{\gamma_l}_tu(x_i,y_j,t_{n-\frac{1}{2}})+b_1\frac{\partial u(x_i,y_j,t_{n-\frac{1}{2}})}{\partial
t}+\sum_{m=1}^qc_mD^{\alpha_m}_tu(x_i,y_j,t_{n-\frac{1}{2}})}\\\label{e26}
&+b_2u(x_i,y_j,t_{n-\frac{1}{2}})
=b_3(\frac{\partial^2 u(x_i,y_j,t_{n-\frac{1}{2}})}{\partial x^2}+\frac{\partial^2 u(x_i,y_j,t_{n-\frac{1}{2}})}{\partial y^2})\\\nonumber
&+\sum_{r=1}^sd_rD^{\beta_r}_t(\frac{\partial^2 u(x_i,y_j,t_{n-\frac{1}{2}})}{\partial x^2}+\frac{\partial^2
u(x_i,y_j,t_{n-\frac{1}{2}})}{\partial y^2})+f(x_i,y_j,t_{n-\frac{1}{2}}).
\end{align}
From Eqs.(\ref{e14}), (\ref{e19}) and (\ref{e21}), we have the following scheme
\begin{align}\nonumber
&\sum_{l=1}^{p}a_l\mu_{1,l}\Big[a_0^{(\gamma_l)}\nabla_tu_{ij}^n-
\sum_{k=1}^{n-1}(a_{n-k-1}^{(\gamma_l)}-a_{n-k}^{(\gamma_l)})\nabla_tu_{ij}^k-
a_{n-1}^{(\gamma_l)}\phi_{ij}\Big]\\\nonumber
&+b_1\nabla_tu_{ij}^n
+\sum_{m=1}^qc_m\mu_{2,m}\sum^n_{k=1}c_{n-k}^{(\alpha_m)}\nabla_tu_{ij}^k
+b_2u_{ij}^{n-\frac{1}{2}}\\\label{e27}
&=b_3(\delta_x^2+\delta_y^2)u_{ij}^{n-\frac{1}{2}}
+\sum_{r=1}^sd_r\mu_{3,r}\sum^n_{k=1}c_{n-k}^{(\beta_r)}\nabla_t(\delta_x^2+\delta_y^2)u_{ij}^k\\\nonumber
&+f_{ij}^{n-\frac{1}{2}}+R_{1ij}^n,
\end{align}
where $\mu_{1,l}=\frac{\tau^{1-\gamma_l}}{\Gamma(3-\gamma_l)}$,
 $\mu_{2,m}=\frac{\tau^{1-\alpha_m}}{\Gamma(2-\alpha_m)}$, $\mu_{3,r}=\frac{\tau^{1-\beta_r}}{\Gamma(2-\beta_r)}$, and
 $u(x_i,y_j,t_{n-\frac{1}{2}})=\frac{u(x_i,y_j,t_n)+u(x_i,y_j,t_{n-1})}{2}+O(\tau^2)$, $\frac{\partial}{\partial
 t}u(x_i,y_j,t_{n-\frac{1}{2}})=\frac{u(x_i,y_j,t_n)-u(x_i,y_j,t_{n-1})}{\tau}+O(\tau^2)$, so $|R_{1ij}^n|\leq
 C(\tau^{\min(3-\gamma_l,2-\alpha_m,2-\beta_r)}+h_x^2+h_y^2)$, in which $C$ is independent of $\tau$, $h_x$ and $h_y$. Omitting the error
 term, we use $U_{ij}^n$ as the numerical solution, then we obtain the implicit finite difference scheme for Eq.(\ref{e1})
\begin{align} \label{e27}
&\sum_{l=1}^{p}a_l\mu_{1,l}\Big[a_0^{(\gamma_l)}\nabla_tU_{ij}^{n}-
\sum_{k=1}^{n-1}(a_{n-k-1}^{(\gamma_l)}-a_{n-k}^{(\gamma_l)})\nabla_tU_{ij}^{k}-
a_{n-1}^{(\gamma_l)}\phi_{ij}\Big]\nonumber\\
&+b_1\nabla_tU_{ij}^{n}+\sum_{m=1}^qc_m\mu_{2,m}\sum^n_{k=1}c_{n-k}^{(\alpha_m)}\nabla_tU_{ij}^k
+b_2U_{ij}^{n-\frac{1}{2}}\nonumber\\
&=b_3(\delta_x^2+\delta_y^2)U_{ij}^{n-\frac{1}{2}}
+\sum_{r=1}^sd_r\mu_{3,r}\sum^n_{k=1}c_{n-k}^{(\beta_r)}\nabla_t(\delta_x^2+\delta_y^2)U_{ij}^k+f_{ij}^{n-\frac{1}{2}},
\end{align}

with initial and boundary conditions
\begin{align*}
U^0_{ij}&=\varphi_{ij},~(x_i,y_j)\in \Omega_h,\\
U_{ij}^n&=0,~(x_i,y_j)\in \mathcal{V}_h,~1\leq n\leq N.
\end{align*}

\section{Analysis of the numerical scheme}

\subsection{Solvability of the scheme}
Firstly, we discuss the solvability of the finite difference scheme (\ref{e27}).
\begin{theorem}\label{thm1}
The finite difference scheme (\ref{e27}) is uniquely solvable.
\end{theorem}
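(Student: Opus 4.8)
The plan is to exploit the fact that scheme (\ref{e27}) is linear and, at each time level $n$, advances from the known data $U^0,U^1,\ldots,U^{n-1}$ to the unknown $U^n$ by solving a square linear system. For such a system, unique solvability is equivalent to the statement that the corresponding homogeneous system has only the trivial solution. So I would argue by induction on $n$: assuming $U^0,\ldots,U^{n-1}$ are already determined, I set $f^{n-\frac12}_{ij}=0$, $\phi_{ij}=0$, and $U^0=\cdots=U^{n-1}=0$, and show that the only grid function $U^n\in\mathcal{V}_h$ satisfying the resulting equations is $U^n\equiv0$.

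First I would collect all terms that contain $U^n$. Using $\nabla_tU^n_{ij}=(U^n_{ij}-U^{n-1}_{ij})/\tau$, the identities $U^{n-\frac12}_{ij}=(U^n_{ij}+U^{n-1}_{ij})/2$, and the observation that the $k=n$ term of each convolution sum $\sum_{k=1}^{n}c^{(\alpha_m)}_{n-k}\nabla_tU^k_{ij}$ contributes exactly $c^{(\alpha_m)}_0\nabla_tU^n_{ij}$ (and similarly for the $\beta_r$-sums), the homogeneous equation reduces to
\begin{equation*}
P\,U^n_{ij}=Q\,(\delta_x^2+\delta_y^2)U^n_{ij},\qquad (x_i,y_j)\in\Omega_h,
\end{equation*}
with
\begin{equation*}
P=\frac{1}{\tau}\Big(\sum_{l=1}^{p}a_l\mu_{1,l}a_0^{(\gamma_l)}+b_1+\sum_{m=1}^{q}c_m\mu_{2,m}c_0^{(\alpha_m)}\Big)+\frac{b_2}{2},\qquad
Q=\frac{b_3}{2}+\frac{1}{\tau}\sum_{r=1}^{s}d_r\mu_{3,r}c_0^{(\beta_r)}.
\end{equation*}
The point here is that $P>0$ and $Q>0$: every physical parameter $a_l,b_i,c_m,d_r$ is positive, the step factors $\mu_{1,l},\mu_{2,m},\mu_{3,r}$ are positive, and $a_0^{(\gamma_l)}=1>0$ together with $c_0^{(\alpha_m)}>0$, $c_0^{(\beta_r)}>0$ from Lemma~\ref{lm3}(1) (and from (\ref{e15}) when $n=1$). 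Thus $P$ and $Q$ are strictly positive constants independent of the grid index.

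Then I would run a one-line energy argument. Taking the discrete inner product of the reduced equation with $U^n$ and invoking the summation-by-parts identity $((\delta_x^2+\delta_y^2)U^n,U^n)=-\langle(\nabla_x+\nabla_y)U^n,(\nabla_x+\nabla_y)U^n\rangle=-|U^n|_1^2$ stated before (\ref{e23}) gives $P\|U^n\|_0^2+Q|U^n|_1^2=0$. Since both summands are nonnegative and $P>0$, this forces $\|U^n\|_0=0$, hence $U^n\equiv0$; the homogeneous system has only the trivial solution, so $U^n$ is uniquely determined at each level and the scheme is uniquely solvable. I expect the only delicate point to be the bookkeeping in the first step: one must isolate precisely the $U^n$-contributions from the multi-term convolutions and the Crank--Nicolson averages, and then certify positivity of $P$ and $Q$ by correctly citing $a_0^{(\gamma)}=1$ and the sign of the leading coefficients $c_0^{(\alpha)},c_0^{(\beta)}$; the energy step itself is immediate once the reduced identity is in hand.
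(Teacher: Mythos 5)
Your proposal is correct, but it takes a genuinely different route from the paper. The paper also isolates the $U^n$-level terms, but then writes the resulting linear system out explicitly as $AU^n=\text{(known data)}$ with a block-tridiagonal coefficient matrix $A$ built from blocks $B$ and $C$ whose entries are $r_1+2r_2+2r_3$, $-r_2$, $-r_3$ with $r_1,r_2,r_3>0$ (the same positive constants you call $P$ and $Q/h^2$), and concludes nonsingularity from strict diagonal dominance. You instead argue on the homogeneous system with the discrete energy identity $((\delta_x^2+\delta_y^2)U^n,U^n)=-|U^n|_1^2$, obtaining $P\|U^n\|_0^2+Q|U^n|_1^2=0$ and hence $U^n\equiv 0$. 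Both hinge on exactly the same positivity facts ($a_0^{(\gamma_l)}=1$, $c_0^{(\alpha_m)}>0$, $c_0^{(\beta_r)}>0$, and positivity of the physical coefficients), and both are complete. Your version is shorter, avoids writing down the matrix, and dovetails with the energy machinery already used for stability; the paper's version has the side benefit of exhibiting the explicit matrix structure needed for implementation. One cosmetic remark: your bookkeeping places the factor $1/\tau$ on the convolution leading coefficients $c_m\mu_{2,m}c_0^{(\alpha_m)}$ and $d_r\mu_{3,r}c_0^{(\beta_r)}$ (coming from $\nabla_t U^n$), whereas the paper's displayed $r_1$ and $r_2,r_3$ omit it; your accounting is the faithful one, and in any case the discrepancy does not affect the sign argument on either side.
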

\begin{proof}
The finite difference scheme (\ref{e27}) can be recast into
\begin{align}
&\bigg(\frac{\sum\limits_{l=1}^{p}a_l\mu_{1,l}+b_1}{\tau}+\sum_{m=1}^qc_m\mu_{2,m}c_0^{(\alpha_m)}+\frac{b_2}{2}\bigg)U_{ij}^n
-\big(\frac{b_3}{2}+\sum_{r=1}^sd_r\mu_{3,r}c_0^{(\beta_r)}\big)(\delta_x^2+\delta_y^2)U_{ij}^n\nonumber\\
=&\big(\frac{\sum\limits_{l=1}^{p}a_l\mu_{1,l}+b_1}{\tau}-\frac{b_2}{2}\big)U_{ij}^{n-1}+\sum_{l=1}^{p}a_l\mu_{1,l}\Big[\sum_{k=1}^{n-1}(a_{n-k-1}^{(\gamma_l)}-a_{n-k}^{(\gamma_l)})\nabla_tU_{ij}^{k-\frac{1}{2}}+
a_{n-1}^{(\gamma_l)}\phi_{ij}\Big]\nonumber\\
+&\sum_{m=1}^qc_m\mu_{2,m}\Big[\sum_{k=1}^{n-1}(c_{n-k-1}^{(\alpha_m)}-c_{n-k}^{(\alpha_m)})U_{ij}^{k}+
c_{n-1}^{(\alpha_m)} U_{ij}^0\Big] \nonumber+\frac{b_3}{2}(\delta_x^2+\delta_y^2)U_{ij}^{n-1}\\\label{e28}
-&\sum_{r=1}^sd_r\mu_{3,r}\Big[\sum_{k=1}^{n-1}(c_{n-k-1}^{(\beta_r)}-c_{n-k}^{(\beta_r)})(\delta_x^2+\delta_y^2)U_{ij}^{k}+
c_{n-1}^{(\beta_r)}(\delta_x^2+\delta_y^2) U_{ij}^0\Big]+(f_{ij}^n+f_{ij}^{n-1})/2,
\end{align}
at each time level, the coefficient matrix $A$ is
\begin{equation*}
A=\left[\begin{array}{cccccc}
B & C & 0& \cdots & 0&0 \\
{C} & B &{C}& \cdots & 0 &0\\
0 & {C}&B & \cdots & 0 &0 \\
\vdots & \vdots & \vdots & \ddots &\vdots &\vdots\\
0 & 0 & 0 & \cdots &B &{C}\\
0 & 0& 0& \cdots &{C}& B \\
\end{array}\right],
\end{equation*}
where $B$ and $C$ are block matrixes,
\begin{equation*}
B=\left[\begin{array}{cccccc}
r_1+2r_2+2r_3 & -r_3 & 0& \cdots & 0&0 \\
-r_3 & r_1+2r_2+2r_3 &-r_3& \cdots & 0 &0\\
0 & -r_3&r_1+2r_2+2r_3& \cdots & 0 &0 \\
\vdots & \vdots & \vdots & \ddots &\vdots &\vdots\\
0 & 0 & 0 & \cdots &r_1+2r_2+2r_3 &-r_3\\
0 & 0& 0& \cdots &-r_3 & r_1+2r_2+2r_3 \\
\end{array}\right],
\end{equation*}
and
\begin{equation*}
C=\left[\begin{array}{cccccc}
-r_2 & 0& 0& \cdots & 0&0 \\
0 & -r_2 &0& \cdots & 0 &0\\
0 & 0&-r_2& \cdots & 0 &0 \\
\vdots & \vdots & \vdots & \ddots &\vdots &\vdots\\
0 & 0 & 0 & \cdots &-r_2 &0\\
0 & 0& 0& \cdots &0 & -r_2 \\
\end{array}\right],
\end{equation*}
where $
U^n=\left[u_{11}^n,u_{12}^n,\cdots,u_{1M_y-1}^n,u_{21}^n,u_{22}^n,\cdots,u_{2M_y-1}^n,\cdots,u_{M_x-11}^n,
u_{M_x-12}^n,\cdots,u_{M_x-1M_y-1}^n\right],\\
F^n=\left[f_{11}^n,f_{12}^n,\cdots,f_{1M_y-1}^n,f_{21}^n,f_{22}^n,\cdots,f_{2M_y-1}^n,\cdots,f_{M_x-11}^n,
f_{M_x-12}^n,\cdots,f_{M_x-1M_y-1}^n\right],\\
\Phi=\left[\phi_{11},\phi_{12},\cdots,\phi_{1M_y-1},\phi_{21},\phi_{22},\cdots,\phi_{2M_y-1},\cdots,
\phi_{M_x-11},\phi_{M_x-12},\cdots,\phi_{M_x-1M_y-1}\right],\\
\Psi=\left[\varphi_{11},\varphi_{12},\cdots,\varphi_{1M_y-1},\varphi_{21},\varphi_{22},\cdots,\varphi_{2M_y-1},
\cdots,\varphi_{M_x-11},\varphi_{M_x-12},\cdots,\varphi_{M_x-1M_y-1}\right],$
where $r_1=\frac{\sum_{l=1}^{p}a_l\mu_{1,l}+b_1}{\tau}+\sum_{m=1}^qc_m\mu_{2,m}c_0^{(\alpha_m)}+\frac{b_2}{2}>0$ ,
$r_2=\frac{\frac{b_3}{2}+\sum_{r=1}^sd_r\mu_{3,r}c_0^{(\beta_r)}}{h_x^2}>0$,
$r_3=\frac{\frac{b_3}{2}+\sum_{r=1}^sd_r\mu_{3,r}c_0^{(\beta_r)}}{h_y^2}>0$,
$r_4=\frac{\sum_{l=1}^{p}a_l\mu_{1,l}+b_1}{\tau}-\frac{b_2}{2}$ and $r_5=\frac{b_3}{2}$.
Then $A$ is a strictly diagonally dominant matrix. Therefore $A$ is nonsingular, which means that the numerical scheme (\ref{e27}) is uniquely
solvable.
\end{proof}
\subsection{Stability}
Then, we will analyze the stability of the schemes (\ref{e27}) by energy method.
\begin{theorem}\label{thm2}
The finite difference scheme (\ref{e27}) is unconditionally stable.
\end{theorem}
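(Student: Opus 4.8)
The plan is to carry out a discrete energy estimate by testing the scheme against the backward time difference. Concretely, I would take the discrete inner product of each term of (\ref{e27}) with $\nabla_t U_{ij}^n$ and sum the resulting identity over $n=1,\dots,L$ for an arbitrary level $L$ with $1\le L\le N$. This particular test function is the linchpin: it is simultaneously the quantity for which Lemma \ref{lm6} yields a lower bound on the $\gamma_l$-terms, the quantity to which the vital Lemma \ref{lm4} applies for the $\alpha_m$- and $\beta_r$-terms, and the factor that makes the two midpoint terms $b_2U^{n-\frac12}$ and $b_3(\delta_x^2+\delta_y^2)U^{n-\frac12}$ telescope cleanly in time.

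I would then treat the six groups of terms separately. For the $\sum_l a_l$ contribution, applying Lemma \ref{lm6} pointwise with $S_n=\nabla_tU_{ij}^n$ and $P=\phi_{ij}$, then multiplying by $h_xh_y$ and summing over the grid, bounds it below by $\sum_l a_l\big[\tfrac{T^{1-\gamma_l}}{2\Gamma(2-\gamma_l)}\sum_{n=1}^L\|\nabla_tU^n\|_0^2-\tfrac{T^{2-\gamma_l}}{2\tau\Gamma(3-\gamma_l)}\|\phi\|_0^2\big]$. The $b_1$-term gives $b_1\sum_{n=1}^L\|\nabla_tU^n\|_0^2\ge0$ directly. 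For the $\sum_m c_m\mu_{2,m}$-terms, applying Lemma \ref{lm4} at each node with $v^k=\nabla_tU_{ij}^k$ and summing with weight $h_xh_y$ shows $\sum_{n=1}^L\sum_{k=1}^n c_{n-k}^{(\alpha_m)}(\nabla_tU^k,\nabla_tU^n)\ge0$. The midpoint term $b_2U^{n-\frac12}$ paired with $\nabla_tU^n$ telescopes to $\tfrac{b_2}{2\tau}(\|U^L\|_0^2-\|U^0\|_0^2)$.

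For the right-hand side I would use the summation-by-parts identities: the diffusion term $b_3(\delta_x^2+\delta_y^2)U^{n-\frac12}$ tested against $\nabla_tU^n$ telescopes via (\ref{e23}) to $-\tfrac{b_3}{2\tau}(|U^L|_1^2-|U^0|_1^2)$, and the fractional diffusion $\sum_r d_r\mu_{3,r}\sum_k c_{n-k}^{(\beta_r)}\nabla_t(\delta_x^2+\delta_y^2)U^k$ is converted by (\ref{e24}) into $-\sum_r d_r\mu_{3,r}\sum_{n}\sum_k c_{n-k}^{(\beta_r)}\langle\nabla_t(\nabla_x+\nabla_y)U^k,\nabla_t(\nabla_x+\nabla_y)U^n\rangle$, which is $\le0$ by another application of Lemma \ref{lm4} to the gradient components. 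Moving both of these to the left side turns them into the nonnegative contributions $\tfrac{b_3}{2\tau}|U^L|_1^2$ and a nonnegative double sum, up to the initial data $\tfrac{b_3}{2\tau}|U^0|_1^2$. The forcing term I would bound by Cauchy--Schwarz and Young's inequality, $(f^{n-\frac12},\nabla_tU^n)\le\tfrac{1}{2\varepsilon}\|f^{n-\frac12}\|_0^2+\tfrac{\varepsilon}{2}\|\nabla_tU^n\|_0^2$.

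Collecting everything, the left side dominates $\tfrac{b_2}{2\tau}\|U^L\|_0^2+\tfrac{b_3}{2\tau}|U^L|_1^2$ plus a strictly positive multiple of $\sum_{n=1}^L\|\nabla_tU^n\|_0^2$; choosing $\varepsilon$ small enough absorbs the $\tfrac{\varepsilon}{2}\sum_n\|\nabla_tU^n\|_0^2$ coming from the forcing into that positive term. Recalling the energy norm (\ref{e22}), this yields $\|U^L\|_1^2\le C\big(\|U^0\|_1^2+\|\phi\|_0^2+\tau\sum_{n=1}^L\|f^{n-\frac12}\|_0^2\big)$ with $C$ independent of $\tau$, $h_x$, $h_y$ and $L$, which is exactly unconditional stability. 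The main obstacle, and the only nonroutine step, is the correct invocation of the vital Lemma \ref{lm4}: it must be applied both to the scalar differences $\nabla_tU_{ij}^k$ for the $\alpha_m$-terms and, after the identity (\ref{e24}), to the vector-valued gradient differences for the $\beta_r$-terms, while keeping careful track of the signs so that every fractional contribution lands on the favorable side of the estimate.
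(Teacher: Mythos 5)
Your proposal is correct and follows essentially the same route as the paper: testing against $\nabla_tU^n$, invoking Lemma \ref{lm6} for the $\gamma_l$-terms, Lemma \ref{lm4} (applied nodewise, and again to the gradient differences via (\ref{e24})) for the $\alpha_m$- and $\beta_r$-terms, telescoping the two midpoint terms, and absorbing the forcing by Young's inequality. The only cosmetic differences are that the paper fixes $N$ rather than an arbitrary level $L$ and picks $\varepsilon$ to cancel the $\sum_n\|\nabla_tU^n\|_0^2$ term exactly rather than absorbing a small multiple of it.
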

\begin{proof}
Multiplying Eq.(\ref{e17}) by $h_xh_y\tau \nabla_tU_{ij}^{n}$ and summing $i$ from 1 to $M_x-1$, $j$ from 1 to $M_y-1$ and summing $n$ from 1
to $N$, we obtain
\begin{align}
&\tau\sum_{n=1}^{N}\sum_{i=1}^{M_x-1}\sum_{j=1}^{M_y-1}\sum_{l=1}^{p}a_l\mu_{1,l}h_xh_y\Big[a_0^{(\gamma_l)}\nabla_tU_{ij}^{n}-
\sum_{k=1}^{n-1}(a_{n-k-1}^{(\gamma_l)}-a_{n-k}^{(\gamma_l)})\nabla_tU_{ij}^{k}-
a_{n-1}^{(\gamma_l)}\phi_{ij}\Big]\nabla_tU_{ij}^{n}
\nonumber\\
&+b_1\tau\sum_{n=1}^{N}\sum_{i=1}^{M_x-1}\sum_{j=1}^{M_y-1}h_xh_y(\nabla_tU_{ij}^{n})^2+
\tau\sum_{n=1}^{N}\sum_{i=1}^{M_x-1}\sum_{j=1}^{M_y-1}\sum_{m=1}^qc_m\mu_{2,m}h_xh_y\sum^n_{k=1}c_{n-k}^{(\alpha_m)}\nabla_tU_{ij}^k\nabla_tU_{ij}^{n}\nonumber\\\label{e29}
&+b_2\tau\sum_{n=1}^{N}\sum_{i=1}^{M_x-1}\sum_{j=1}^{M_y-1}h_xh_yU_{ij}^{n-\frac{1}{2}}\nabla_tU_{ij}^{n}
=b_3\tau\sum_{n=1}^{N}\sum_{i=1}^{M_x-1}\sum_{j=1}^{M_y-1}h_xh_y(\delta_x^2+\delta_y^2)U_{ij}^{n-\frac{1}{2}}\nabla_tU_{ij}^{n}\nonumber\\
&+\tau\sum_{n=1}^{N}\sum_{i=1}^{M_x-1}\sum_{j=1}^{M_y-1}\sum_{r=1}^sd_r\mu_{3,r}h_xh_y\sum^n_{k=1}c_{n-k}^{(\beta_r)}\nabla_t(\delta_x^2+\delta_y^2)U_{ij}^k\nabla_tU_{ij}^{n}\nonumber\\
&+\tau\sum_{n=1}^{N}\sum_{i=1}^{M_x-1}\sum_{j=1}^{M_y-1}h_xh_yf_{ij}^{n-\frac{1}{2}}\nabla_tU_{ij}^{n}.
\end{align}
For the first term, using Lemma \ref{lm6}, we have
\begin{align}
&\tau\sum_{n=1}^{N}\sum_{i=1}^{M_x-1}\sum_{j=1}^{M_y-1}\sum_{l=1}^{p}a_l\mu_{1,l}h_xh_y\Big[a_0^{(\gamma_l)}\nabla_tU_{ij}^{n}-
\sum_{k=1}^{n-1}(a_{n-k-1}^{(\gamma_l)}-a_{n-k}^{(\gamma_l)})\nabla_tU_{ij}^{k}-
a_{n-1}^{(\gamma_l)}\phi_{ij}\Big]\nabla_tU_{ij}^{n}\nonumber\\
\geq& \sum_{l=1}^{p}a_l\frac{\tau T^{1-\gamma_l}}{2\Gamma(2-\gamma_l)}\sum_{n=1}^{N}\sum_{i=1}^{M_x-1}\sum_{j=1}^{M_y-1}h_xh_y
(\nabla_tU_{ij}^{n})^2
-\sum_{l=1}^{p}a_l\frac{a_1 T^{2-\gamma_l}}{2\Gamma(3-\gamma_l)}\sum_{i=1}^{M_x-1}\sum_{j=1}^{M_y-1}h_xh_y\phi^2_{ij}\nonumber\\\label{e30}
=&\sum_{l=1}^{p}a_l\frac{\tau T^{1-\gamma_l}}{2\Gamma(2-\gamma_l)}\sum_{n=1}^{N}||\nabla_tU^{n}||_0^2
-\sum_{l=1}^{p}a_l\frac{T^{2-\gamma_l}}{2\Gamma(3-\gamma_l)}||\phi||_0^2.
\end{align}
For the second term, we have
\begin{align}\label{e31}
b_1\tau\sum_{n=1}^{N}\sum_{i=1}^{M_x-1}\sum_{j=1}^{M_y-1}h_xh_y(\nabla_tU_{ij}^{n})^2
=b_1\tau\sum_{n=1}^{N}||\nabla_tU^{n}||_0^2.
\end{align}
Using Lemma \ref{lm4}, we obtain
\begin{align}\label{e32}
&\tau\sum_{n=1}^{N}\sum_{i=1}^{M_x-1}\sum_{j=1}^{M_y-1}\sum_{m=1}^qc_m\mu_{2,m}h_xh_y\sum^n_{k=1}c_{n-k}^{(\alpha_m)}\nabla_tU_{ij}^k\nabla_tU_{ij}^{n}\nonumber\\
=& \sum_{m=1}^qc_m\mu_{2,m}\sum_{n=1}^{N}\sum_{k=1}^{n}c_{n-k}^{(\alpha_m)}(\nabla U^k,\nabla U^n)>0.
\end{align}
For the fourth term, we have
\begin{align}
&b_2\tau\sum_{n=1}^{N}\sum_{i=1}^{M_x-1}\sum_{j=1}^{M_y-1}h_xh_yU_{ij}^{n-\frac{1}{2}}\nabla_tU_{ij}^{n}=
\frac{b_2}{2}\sum_{n=1}^{N}(U^n+U^{n-1},U^n-U^{n-1})\nonumber\\\label{e33}
=&\frac{b_2}{2}\sum_{n=1}^{N}(||U^{n}||_0^2-||U^{n-1}||_0^2)=\frac{b_2}{2}(||U^{N}||_0^2-||U^{0}||_0^2).
\end{align}
Applying (\ref{e23}), we obtain
\begin{align}
&b_3\tau\sum_{n=1}^{N}\sum_{i=1}^{M_x-1}\sum_{j=1}^{M_y-1}h_xh_y(\delta_x^2+\delta_y^2)U_{ij}^{n-\frac{1}{2}}\nabla_tu_{ij}^{n}
=b_3\tau\sum_{n=1}^{N}((\delta_x^2+\delta_y^2)U^{n-\frac{1}{2}},\nabla_tU^{n})\nonumber\\
=&-\frac{b_3}{2}\sum_{n=1}^{N}
\langle(\nabla_x+\nabla_y)U^{n}+(\nabla_x+\nabla_y)U^{n-1},(\nabla_x+\nabla_y)U^n-(\nabla_x+\nabla_y)U^{n-1}\rangle\nonumber\\\label{e34}
=&-\frac{b_3}{2}\sum_{n=1}^{N}(|U^{n}|_1^2-|U^{n-1}|_1^2)
=\frac{b_3}{2}(|U^{0}|_1^2-|U^{N}|_1^2).
\end{align}
Combining (\ref{e14}) and Lemma \ref{lm4}, we have
\begin{align}\label{e35}
&\tau\sum_{n=1}^{N}\sum_{i=1}^{M_x-1}\sum_{j=1}^{M_y-1}\sum_{r=1}^sd_r\mu_{3,r}h_xh_y\sum^n_{k=1}c_{n-k}^{(\beta_r)}\nabla_t(\delta_x^2+\delta_y^2)U_{ij}^k\nabla_tU_{ij}^{n}\nonumber\\
=&\sum_{r=1}^sd_r\mu_{3,r}\tau\sum_{n=1}^{N}\sum^n_{k=1}c_{n-k}^{(\beta_r)}(\nabla_t((\delta_x^2+\delta_y^2)U^k),\nabla_tU^{n})\nonumber\\
=&-\sum_{r=1}^sd_r\mu_{3,r}\tau\sum_{n=1}^{N}\sum^n_{k=1}c_{n-k}^{(\beta_r)}
\langle\nabla_t((\nabla_x+\nabla_y)U^k),\nabla_t((\nabla_x+\nabla_y)U^n)\rangle<0.
\end{align}
For the last term and using the important inequality $ab\leq \varepsilon a^2+\frac{b^2}{4\varepsilon}$, we have
\begin{align}
&\tau\sum_{n=1}^{N}\sum_{i=1}^{M_x-1}\sum_{j=1}^{M_y-1}h_xh_yf_{ij}^{n-\frac{1}{2}}\nabla_tU_{ij}^{n}\nonumber\\
\leq& \tau\bigg(\sum_{l=1}^{p}\frac{a_l T^{1-\gamma_l}}{2\Gamma(2-\gamma_l)}+b_1\bigg)\sum_{n=1}^{N}\sum_{i=1}^{M_x-1}\sum_{j=1}^{M_y-1}h_xh_y
(\nabla_tU_{ij}^{n})^2\nonumber\\\label{e36}
&+\frac{\tau}{4\bigg(\sum_{l=1}^{p}\frac{a_lT^{1-\gamma_l}}{2\Gamma(2-\gamma_l)}+b_1\bigg)}
\sum_{n=1}^{N}\sum_{i=1}^{M_x-1}\sum_{j=1}^{M_y-1}h_xh_y(f_{ij}^{n-\frac{1}{2}})^2\\\nonumber
=&\tau\bigg(\sum_{l=1}^{p}a_l\frac{ T^{1-\gamma_l}}{2\Gamma(2-\gamma_l)}+b_1\bigg)\sum_{n=1}^{N}||\nabla_tU^{n}||_0^2+
\frac{\tau}{4\bigg(\sum_{l=1}^{p}\frac{a_lT^{1-\gamma_l}}{2\Gamma(2-\gamma_l)}+b_1\bigg)}
\sum_{n=1}^{N}||f^{n-\frac{1}{2}}||_0^2.
\end{align}
Substituting (\ref{e30})-(\ref{e36}) into (\ref{e27}), we have
\begin{align*}
&\tau\bigg(\sum_{l=1}^{p}\frac{a_l T^{1-\gamma_l}}{2\Gamma(2-\gamma_l)}+b_1\bigg)\sum_{n=1}^{N}||\nabla_tU^{n}||_0^2
-\sum_{l=1}^{p}\frac{a_l T^{2-\gamma_l}}{2\Gamma(3-\gamma_l)}||\phi||_0^2\\
&+\frac{b_2}{2}(||U^{N}||_0^2-||u^{0}||_0^2)\leq \frac{b_3}{2}(|U^{0}|_1^2-|U^{N}|_1^2)\\
+&\tau\bigg(\sum_{l=1}^{p}\frac{a_l T^{1-\gamma_l}}{2\Gamma(2-\gamma_l)}+b_1\bigg)\sum_{n=1}^{N}||\nabla_tU^{n}||_0^2+
\frac{\tau}{4\bigg(\sum_{l=1}^{p}\frac{a_lT^{1-\gamma_l}}{2\Gamma(2-\gamma_l)}+b_1\bigg)}
\sum_{n=1}^{N}||f^{n-\frac{1}{2}}||_0^2,
\end{align*}
then we have
\begin{align}
&{b_2}||U^{N}||_0^2+{b_3}|U^{N}|_1^2\nonumber\\
\leq& {b_2}||U^{0}||_0^2+{b_3}|U^{0}|_1^2\nonumber\\\label{e37}
+&\sum\limits_{l=1}^{p}\frac{a_l T^{2-\gamma_l}}{\Gamma(3-\gamma_l)}||\phi||_0^2+\frac{T}{\sum\limits_{l=1}^{p}\frac{a_l
T^{1-\gamma_l}}{\Gamma(2-\gamma_l)}+2b_1}
\max_{1\leq n\leq N}||f^{n-\frac{1}{2}}||_0^2.
\end{align}

From Eq.(23), the definition of $H^1$ norm, we have
\begin{align}
&||U^{N}||_{1}^2\leq |U^{0}|_{H^1}^2\nonumber\\\label{e38}
+&\sum\limits_{l=1}^{p}\frac{a_l T^{2-\gamma_l}}{\Gamma(3-\gamma_l)}||\phi||_0^2+\frac{T}{\sum\limits_{l=1}^{p}\frac{a_l
T^{1-\gamma_l}}{\Gamma(2-\gamma_l)}+2b_1}
\max_{1\leq n\leq N}||f^{n-\frac{1}{2}}||_0^2.
\end{align}
which means that the scheme (\ref{e27}) is unconditionally stable.
\end{proof}
\subsection{Convergence}
Now we discuss the convergence of the scheme (\ref{e27}).
\begin{theorem}\label{thm3}
Define ${u}^n$ and $U^n$
 as the exact solution and numerical solution vectors of scheme (\ref{e27}), respectively. Suppose that the solution to problem (\ref{e1})
 satisfies $u(x,y,t)\in C_{x,y,t}^{4,4,3}(\Omega)$, then there exists a positive constant $C$ independent of $h$ and $\tau$ such that
\begin{align*}
||{u}^n-U^n||_{H^1}&\leq C\sqrt{\frac{TL_xL_y }{{\sum_{l=1}^{p} \frac{a_l
T^{1-\gamma_l}}{\Gamma(2-\gamma_l)}}+2b_1}}(\tau^{\min(3-\gamma_l,2-\alpha_m,2-\beta_r)}+h_x^2+h_y^2).
\end{align*}
\end{theorem}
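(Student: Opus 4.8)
The plan is to reduce the convergence estimate to the stability estimate (\ref{e38}) already proved in Theorem \ref{thm2}, applied to the error equation. First I would introduce the error grid function $e_{ij}^n = u(x_i,y_j,t_n) - U_{ij}^n$. When the exact solution is substituted into the scheme, it satisfies (\ref{e27}) up to the local truncation error $R_{1ij}^n$ (this is exactly the content of the consistency equation (\ref{e26}) together with the discretizations (\ref{e14}), (\ref{e19}) and (\ref{e21})), whereas the numerical solution satisfies (\ref{e27}) exactly. Subtracting the two identities shows that $e_{ij}^n$ obeys the same linear scheme (\ref{e27}), but with the source $f_{ij}^{n-\frac{1}{2}}$ replaced by the truncation error $R_{1ij}^n$.

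Next I would verify that the auxiliary data in the error equation vanish. Because $U_{ij}^0 = \varphi_{ij} = u(x_i,y_j,0)$ we have $e_{ij}^0 = 0$, so $\|e^0\|_0 = 0$ and $|e^0|_1 = 0$; because the initial-velocity data $\phi_{ij}$ are matched exactly and enter (\ref{e27}) with identical coefficients in both the consistency and the discrete equation, the $\phi_{ij}$-contributions cancel upon subtraction. The boundary values of $e$ also vanish, so $e^n \in \mathcal{V}_h$. Consequently the error equation is precisely an instance of the problem analysed in Theorem \ref{thm2}, with homogeneous initial data and with right-hand side $R_1^n$; no new energy-method computation is required, since Lemma \ref{lm4} and Lemma \ref{lm6} were the only structural facts used.

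I would then invoke the stability estimate (\ref{e38}) verbatim for $e^n$. Since $|e^0|_{H^1}=0$ and the $\|\phi\|_0$-term is absent, that estimate collapses to
$$\|e^N\|_1^2 \le \frac{T}{\sum_{l=1}^{p}\frac{a_l T^{1-\gamma_l}}{\Gamma(2-\gamma_l)}+2b_1}\max_{1\le n\le N}\|R_1^n\|_0^2,$$
where $\|\cdot\|_1$ is the discrete energy norm (\ref{e22}), which is equivalent to and controls the discrete $H^1$ norm. It then remains to bound $\|R_1^n\|_0$. Using the pointwise truncation bound $|R_{1ij}^n| \le C(\tau^{\min(3-\gamma_l,2-\alpha_m,2-\beta_r)}+h_x^2+h_y^2)$, valid under the regularity $u\in C^{4,4,3}_{x,y,t}$ (the third-order temporal accuracy coming from Lemma \ref{lm1} and Lemma \ref{lm5}, the fourth-order spatial differences from $R_5$), together with $h_xh_y(M_x-1)(M_y-1)\le L_xL_y$, I obtain $\|R_1^n\|_0^2 \le C^2 L_xL_y(\tau^{\min(3-\gamma_l,2-\alpha_m,2-\beta_r)}+h_x^2+h_y^2)^2$. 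Substituting and taking square roots (and letting the final level $N$ be an arbitrary $n$) yields exactly
$$\|u^n - U^n\|_{H^1} \le C\sqrt{\frac{TL_xL_y}{\sum_{l=1}^{p}\frac{a_l T^{1-\gamma_l}}{\Gamma(2-\gamma_l)}+2b_1}}\big(\tau^{\min(3-\gamma_l,2-\alpha_m,2-\beta_r)}+h_x^2+h_y^2\big).$$

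The norm bookkeeping in the last step is routine; the point that must be handled carefully is the opening reduction, namely confirming that the homogeneous initial data and the matched $\phi_{ij}$-terms make the error solve the stability problem with zero initial energy and only the truncation error as forcing. Once that reduction is clean, convergence follows at once from unconditional stability, which is why I expect the crux of the argument to lie entirely in setting up the error equation correctly rather than in any new estimate.
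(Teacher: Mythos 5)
Your proposal is correct and follows essentially the same route as the paper: form the error equation by subtracting the scheme from the consistency relation, observe that the initial data, the $\phi_{ij}$-terms, and the boundary values all cancel or vanish, and then apply the stability estimate (\ref{e38}) with the forcing replaced by the truncation error $R_{1ij}^n$, whose pointwise bound yields the stated rate after summing over the grid. The only cosmetic difference is that you bound the forcing contribution by $T\max_n\|R^n\|_0^2$ while the paper writes it as $\tau\sum_{k=1}^n\|R^k\|_0^2$ before estimating; both give the same constant.
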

\begin{proof}
Denote $e_{ij}^n=u_{ij}^n-U_{ij}^n$, and $e^n$ is the error vector. Subtracting (\ref{e27}) from (\ref{e26}), we have
\begin{align*}
&\sum_{l=1}^{p}a_l\mu_{1,l}\Big[a_0^{(\gamma_r)}\nabla_te_{ij}^{n}-
\sum_{k=1}^{n-1}(a_{n-k-1}^{(\gamma_r)}-a_{n-k}^{(\gamma_r)})\nabla_te_{ij}^{k}\Big]+b_1\nabla_te_{ij}^{n}\\
+&\sum_{m=1}^{q}c_m\mu_{2,m}\Big[c_0^{(\alpha_m)}e_i^n-
\sum_{k=1}^{n-1}(c_{n-k-1}^{(\alpha_m)}-c_{n-k}^{(\alpha_m)})e_{ij}^k-
c_{n-1}^{(\alpha_m)} e_{ij}^0\Big]+b_2e_i^{n-\frac{1}{2}}\\
=&b_3(\delta_x^2+\delta_y^2)e_i^{n-\frac{1}{2}}+\sum_{r=1}^{s}d_r\mu_{3,r}\Big[c_0^{(\beta_r)}(\delta_x^2+\delta_y^2)e_{ij}^{n}-
\sum_{k=1}^{n-1}(c_{n-k-1}^{(\beta_r)}-c_{n-k}^{(\beta_r)})(\delta_x^2+\delta_y^2)e_{ij}^k\\
&-c_{n-1}^{(\beta_r)}(\delta_x^2+\delta_y^2) e_{ij}^0\Big]+R_{ij}^{n-\frac{1}{2}},
\end{align*}
with
\begin{align*}
e_{ij}^0&=0, \quad (x_i,y_j)\in\Omega_h,\\
e_{ij}^n&=0, \quad (x_i,y_j)\in\mathcal{V}_h,~1\leq n\leq N.
\end{align*}
Applying Theorem 2, and
\begin{align*}
|R_{ij}^{n-\frac{1}{2}}|\leq C(\tau^{\min(3-\gamma_l,2-\alpha_m,2-\beta_r)}+h_x^2+h_y^2).
\end{align*}
Then from (\ref{e38}), we can obtain
 \begin{align*}
||e^{n}||_{1}^2&\leq \frac{\tau h_xh_y}{{\sum\limits_{l=1}^p \frac{a_l T^{1-\gamma_l}}{\Gamma(2-\gamma_l)}}+2b_1}
\sum\limits_{k=1}^{n}\sum\limits_{i=1}^{M_x-1}\sum\limits_{i=1}^{M_y-1}(R_{ij}^k)^2\\
&\leq \frac{\tau h_xh_y}{{\sum\limits_{l=1}^p \frac{a_l T^{1-\gamma_l}}{\Gamma(2-\gamma_l)}}+2b_1}
\sum\limits_{k=1}^{n}\sum_{i=1}^{M_x-1}\sum\limits_{i=1}^{M_y-1}C^2(\tau^{\min(3-\gamma_l,2-\alpha_m,2-\beta_r)}+h_x^2+h_y^2)^2\\
&\leq \frac{C^2n\tau(M_x-1)h_x(M_y-1)h_y}{{\sum\limits_{l=1}^p \frac{a_l T^{1-\gamma_l}}{\Gamma(2-\gamma_l)}}+2b_1}
(\tau^{\min(3-\gamma_l,2-\alpha_m,2-\beta_r)}+h_x^2+h_y^2)^2\\
&\leq \frac{C^2TL_xL_y}{{\sum\limits_{l=1}^p\frac{a_l T^{1-\gamma_l}}{\Gamma(2-\gamma_l)}}+2b_1}
(\tau^{\min(3-\gamma_l,2-\alpha_m,2-\beta_r)}+h_x^2+h_y^2)^2.
\end{align*}
\end{proof}
This completes the proof of convergence of the difference scheme (27).
\section{Numerical examples}
\par In this section, we carry out some numerical experiments using the proposed finite difference schemes to illustrate our theoretical
statements.
\par \textbf{Example 1}\quad Consider the following multi-term time fractional diffusion equation
\begin{eqnarray*}
   \begin{cases}
   ~{D_t^{\gamma_1}} u(x,y,t)+{D_t^{\gamma_2}} u(x,y,t)+\frac{\partial u(x,y,t)}{\partial {t}}+{D_t^{\alpha_1}} u(x,y,t)+{D_t^{\alpha_2}}
   u(x,y,t)+u(x,y,t)\\[+1pt]
   ~=\frac{\partial^2 u(x,y,t)}{\partial {x^2}}+\frac{\partial^2 u(x,y,t)}{\partial {y^2}}
   +{D_t^{\beta_1}}(\frac{\partial^2 u(x,y,t)}{\partial {x^2}}+\frac{\partial^2 u(x,y,t)}{\partial {y^2}})\\[+1pt]
   ~+{D_t^{\beta_2}}(\frac{\partial^2 u(x,y,t)}{\partial {x^2}}+\frac{\partial^2 u(x,y,t)}{\partial {y^2}})+f(x,y,t), \\[+2pt]
   ~u(x,y,0)=\sin(\pi x)\sin(\pi y),\quad u_t(x,y,0)=0,\quad 0\le x\le 1,0\le y\le 1,\\[+1pt]
   ~u(0,y,t)=0,u(1,y,t)=0;\qquad u(x,0,t)=0,u(x,1,t)=0 \quad\quad 0\le t\le T,
\end{cases}
\end{eqnarray*}
where $(x,y,t)\in[0,1]\times[0,1]\times[0,T],$ $0<\alpha_1, \alpha_2, \beta_1, \beta_2<1$, $1<\gamma_1,\gamma_2<2$, the source term is
$ f(x,y,t)=\sin(\pi x)\sin(\pi
y)\Big[\frac{\Gamma(4)t^{3-\gamma_1}}{\Gamma(4-\gamma_1)}+\frac{\Gamma(4)t^{3-\gamma_2}}{\Gamma(4-\gamma_2)}+3t^2
+\frac{\Gamma(4)t^{3-\alpha_1}}{\Gamma(4-\alpha_1)}+\frac{\Gamma(4)t^{3-\alpha_2}}{\Gamma(4-\alpha_2)}+(1+2\pi^2)(t^3+1)+
\frac{2\pi^2\Gamma(4)t^{3-\delta_1}}{\Gamma(4-\delta_1)}+\frac{2\pi^2\Gamma(4)t^{3-\delta_2}}{\Gamma(4-\delta_2)}\Big],
$ and the exact solution is $u(x,y,t)=(t^3+1)\sin(\pi x)\sin(\pi y)$.
\par In this simulation, we choose $h_x=h_y=h,$ and use the implicit finite difference scheme (\ref{e27})
to solve the equation and the numerical results are given in Table 1 and Table 2.
Table 1 shows the $L_2$ error and $L_\infty$ error and the convergence order
of $h$ for different $\gamma_1$, $\gamma_2$, $\alpha_1$, $\alpha_2$, $\beta_1$, $\beta_2$ with $\tau=1/1000$ at $t=1$. Table 2 shows the $L_2$
error and $L_\infty$ error and the convergence order
of $\tau$ for different $\gamma_1$, $\gamma_2$, $\alpha_1$, $\alpha_2$, $\beta_1$, $\beta_2$ with
$\tau^{\min\{3-\gamma_1,3-\gamma_2,2-\alpha_1,2-\alpha_2,2-\beta_1,2-\beta_2\}}\approx h^2$ at $t=1$.
\par From the tables, we can find the numerical results are in good agreement with the exact solution and reach the accuracy of
$\tau^{\min\{3-\gamma_1,3-\gamma_2,2-\alpha_1,2-\alpha_2,2-\beta_1,2-\beta_2\}}+h^2$ order, which demonstrates the effectiveness of our
numerical method and confirms the theoretical analysis.

\begin{table}[htbp]
\caption{The spacial error and convergence order for different $\gamma_1$, $\gamma_2$, $\alpha_1$, $\alpha_2$, $\beta_1$, $\beta_2$ with
$\tau=1/1000$.} \centering
\begin{tabular}{c c c c c }
 \toprule
$\gamma_1=1.8,~\gamma_2=1.6,~\alpha_1=\beta_1=0.8,~\alpha_2=\beta_2=0.6$ & $ L_2 $ error& Order & $L_\infty $ error& Order \\
\midrule
$h=1/4$ & 3.1425E-02 & & 6.2850E-02 & \\
$h=1/8$ & 7.7096E-03 & 2.03 & 1.5419E-02 & 2.03 \\
$h=1/16$ & 1.9092E-03 & 2.01 & 3.8185E-03 & 2.01 \\
$h=1/32$ & 4.6706E-04 & 2.03 & 9.3413E-04 & 2.03 \\
$h=1/64$ & 1.0701E-04 & 2.13 & 2.1402E-04 & 2.13 \\
\midrule
$\gamma_1=1.4,~\gamma_2=1.2,~\alpha_1=\beta_1=0.6,~\alpha_2=\beta_2=0.4$ & $ L_2 $ error & Order & $L_\infty$ error& Order \\
\midrule
$h=1/4$ & 3.1612E-02 & & 6.3224E-02 & \\
$h=1/8$ & 7.7561E-03 & 2.03 & 1.5512E-02 & 2.03 \\
$h=1/16$ & 1.9248E-03 & 2.01 & 3.8497E-03 & 2.01 \\
$h=1/32$ & 4.7517E-04 & 2.02 & 9.5033E-04 & 2.02 \\
$h=1/64$ & 1.1326E-04 & 2.07 & 2.2652E-04 & 2.07 \\
\bottomrule
\end{tabular}
\end{table}

\begin{table}[htbp]
\caption{The temporal error and convergence order for different $\gamma_1$, $\gamma_2$, $\alpha_1$, $\alpha_2$, $\beta_1$, $\beta_2$ with
$\tau^{\min\{3-\gamma_1,3-\gamma_2,2-\alpha_1,2-\alpha_2,2-\beta_1,2-\beta_2\}}\approx h^2$.} \centering
\begin{tabular}{c c c c c }
 \toprule
$\gamma_1=1.8,~\gamma_2=1.6,~\alpha_1=\beta_1=0.8,~\alpha_2=\beta_2=0.6$ & $ L_2 $ error& Order & $L_\infty $ error& Order \\
\midrule
$\tau=1/20$ & 1.2344E-02 & & 2.4689E-02 & \\
$\tau=1/40$ & 5.4202E-03 & 1.19 & 1.0513E-02 & 1.23 \\
$\tau=1/80$ & 2.2146E-03 & 1.29 & 4.4293E-03 & 1.25 \\
$\tau=1/160$ & 9.8769E-04 & 1.16 & 1.9643E-03 & 1.17 \\
$\tau=1/320$ & 4.2609E-04 & 1.21 & 8.5217E-04 & 1.20 \\
\midrule
$\gamma_1=1.4,~\gamma_2=1.2,~\alpha_1=\beta_1=0.6,~\alpha_2=\beta_2=0.4$ & $ L_2 $ error & Order & $L_\infty$ error& Order \\
\midrule
$\tau=1/20$ & 6.1264E-03 & & 1.2253E-02 & \\
$\tau=1/40$ & 2.2731E-03 & 1.43 & 4.4801E-03 & 1.45 \\
$\tau=1/80$ & 8.6577E-04 & 1.39 & 1.7219E-03 & 1.38 \\
$\tau=1/160$ & 3.0586E-04 & 1.50 & 6.1050E-04 & 1.50 \\
$\tau=1/320$ & 1.1523E-04 & 1.41 & 2.3028E-04 & 1.41 \\
\bottomrule
\end{tabular}
\end{table}

\textbf{Example 2}\quad Consider the following model\cite{Fetecau}:
\begin{eqnarray*}
   \begin{cases}
   ~(1+\lambda^\alpha{D}_t^{\alpha})\frac{\partial u(x,y,t)}{\partial t}=\nu(1+\lambda^\beta{D}_t^\beta)(\frac{\partial^2}{\partial
   x^2}+\frac{\partial^2}{\partial y^2})u(x,y,t),\\[+1pt]
   ~u(x,y,0)=u_t(x,y,0)=0,\quad x>0, \quad0\le y\le d,\\[+1pt]
   ~ u(x,0,t)=u(x,d,t)=0,\quad x>0, \quad t\geq 0, \\[+2pt]
   ~ u(0,y,t)=At,u(L,y,t)=0,\quad x>0,\quad t\geq0,
\end{cases}
\end{eqnarray*}
where $d$ is the distance between the two side walls, $L$ is the distance of the plate in $x$-direction.
In the calculation, we choose $h=1/20$, $\tau=1/100$, $L=d=5$.
In order to observe the effects of different physical parameter on
the velocity field, we plot some figures to demonstrate the dynamic
characteristics of the generalized Oldroyd-B fluid. The variations of $u(x,y,t)$ with $x,y$ for different values of $\lambda,\nu,\alpha,\beta$
at a fixed time($t=1$) are illustrated in Figs. 1-2, from the figures, we can conclude that the parameter $\lambda,\nu$ and the fractional
order $\alpha, \beta$ have effects on the velocity function $u(x,y,t)$. Fig 3 shows the influence of time on the velocity and we can note that
the flow velocity increases with $t=0.5$ and $t=1$ respectively.
\begin{figure}[!h]
\begin{center}
\begin{minipage}{0.47\textwidth}\centering
\epsfig{figure=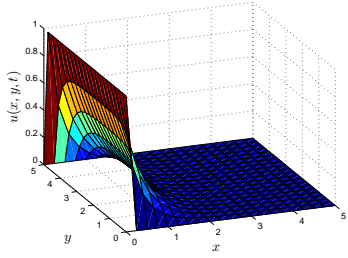,width=5cm} \par {(a) $\lambda=4,~\nu=1$}
\end{minipage}
\begin{minipage}{0.47\textwidth}\centering
\epsfig{figure=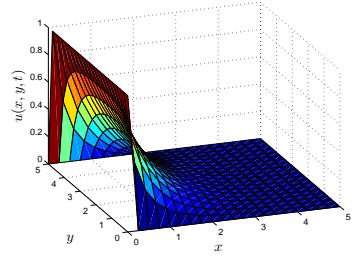,width=5cm} \par {(b) $\lambda=1,~\nu=2$}
\end{minipage}
\end{center}
\caption{Numerical solution profiles of velocity $u(x,y,t)$ at $~\alpha=0.8,~\beta=0.6.$ }
\end{figure}
\begin{figure}[!h]
\begin{center}
\begin{minipage}{0.47\textwidth}\centering
\epsfig{figure=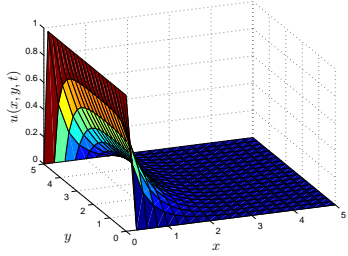,width=5cm} \par {(a) $~\alpha=0.6,~\beta=0.5$ }
\end{minipage}
\begin{minipage}{0.47\textwidth}\centering
\epsfig{figure=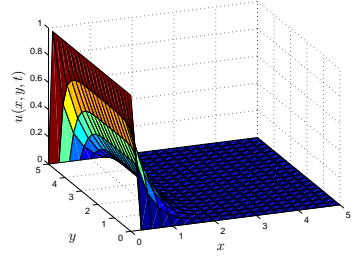,width=5cm} \par {(b) $~\alpha=0.8,~\beta=0.2$}
\end{minipage}
\end{center}
\caption{Numerical solution profiles of velocity $u(x,y,t)$ at $~\lambda=2,~\nu=1$.}
\end{figure}

\begin{figure}[!h]
\begin{center}
\begin{minipage}{0.47\textwidth}\centering
\epsfig{figure=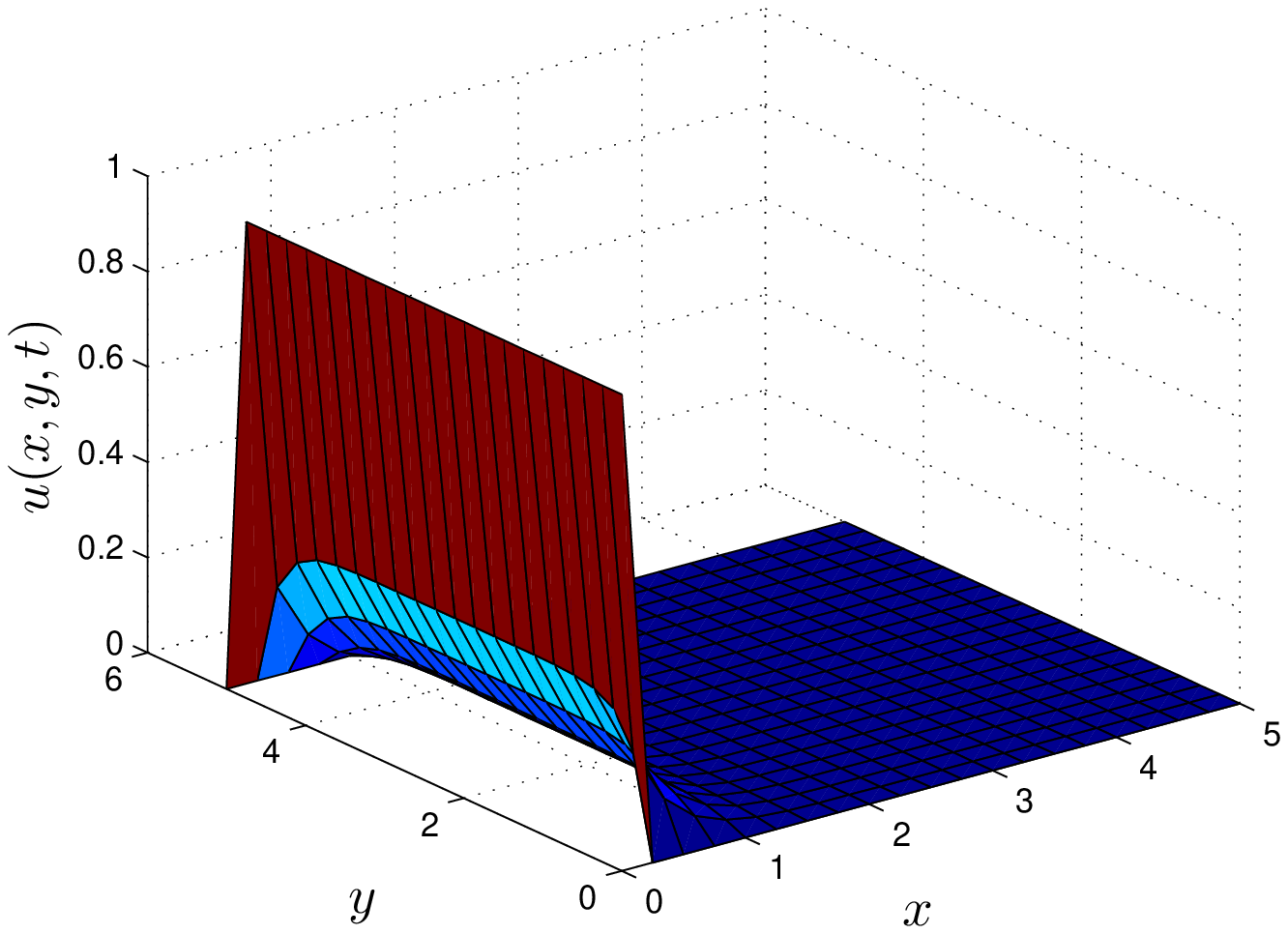,width=5cm} \par {(a) $~t=0.5$}
\end{minipage}
\begin{minipage}{0.47\textwidth}\centering
\epsfig{figure=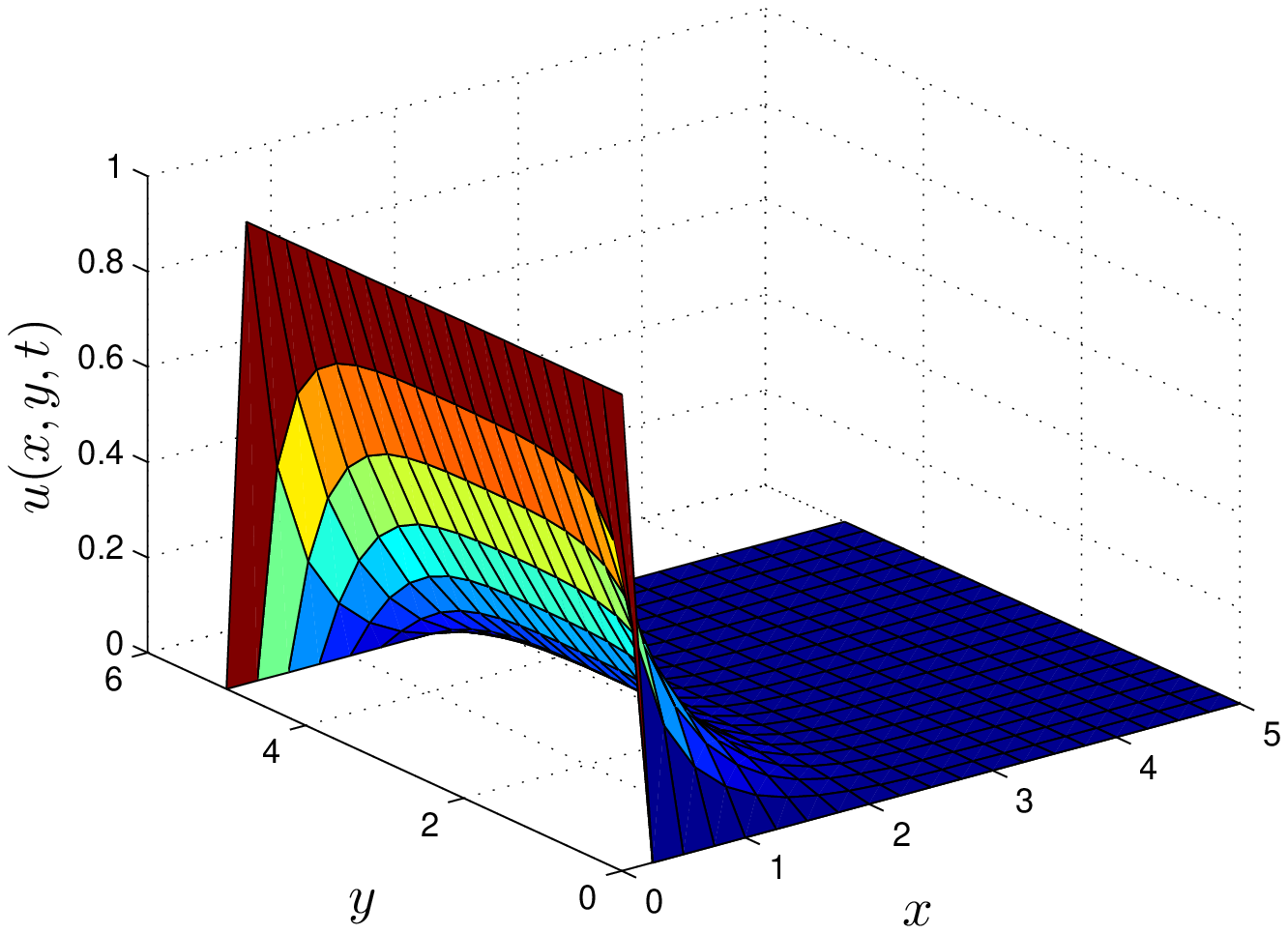,width=5cm} \par {(b) $~t=1$}
\end{minipage}
\end{center}
\caption{Numerical solution profiles of velocity $u(x,y,t)$ of at $~\alpha=0.8,~\beta=0.4,~\lambda=5,~\nu=2$.}
\end{figure}

\section{Conclusion}
In this paper, we proposed a finite difference method to solve the multi-term time fractional diffusion equation incorporating the unsteady
MHD Couette flow of a generalized Oldroyd-B fluid. We give a implicit finite difference schemes with accuracy of
$O(\tau^{\min\{3-\gamma_l,2-\alpha_m,2-\beta_r\}}+h_x^2+h_y^2)$. In addition, we established the stability and convergence analysis for the
implicit difference scheme. Two numerical examples were exhibited to verify the effectiveness and reliability of our method. We can conclude
that our numerical method is robust and can be extended to other multi-term time fractional diffusion equations, such as the generalized
Oldroyd-B fluid in a rotating system and the generalized Maxwell fluid model. In future work, we shall investigate alternating direction
implicit (ADI) method to the two-dimensional generalized Oldroyd-B fluid, convert the two-dimensional
computation to several one-dimensional ones, and reduce the computing time and storage.\\
\noindent {\bf Acknowledgements}
\par The first author wishes to acknowledge that this work was partially supported by National Natural Science Foundation of China (Nos.11801060).
The second two author wish to acknowledge that this research was partially supported by the Australian Research Council (ARC) via the Discovery Project (DP180103858).

%




\end{document}